\documentclass[12pt]{article}
\usepackage{graphicx}
\usepackage{epstopdf}
\usepackage{subfigure}
\usepackage{color}
\usepackage[svgnames]{xcolor}
\usepackage[english]{babel}
\usepackage{amsmath,amsthm}
\usepackage{array}
\usepackage{multirow}
\usepackage{amssymb}
\usepackage{isomath}
\usepackage{blkarray}
\usepackage{cancel}
\usepackage{extarrows}

\usepackage{amsfonts}
\usepackage{booktabs}
\usepackage{algorithm}
\usepackage{algorithmic}
\usepackage{pifont}
\usepackage{bm} 
\usepackage{authblk}
\usepackage{appendix}
\usepackage{mathtools}
\usepackage{float} 

\usepackage{caption} % 关键宏包
\usepackage{graphicx}
\usepackage{subfigure}
\usepackage{tabularx}
\usepackage[colorlinks=true, linkcolor=red, citecolor=green]{hyperref}
\usepackage{url}
\newtheorem{thm}{Theorem}[section]
\newtheorem{defi}{Definition}[section]
\newtheorem{lem}[thm]{Lemma}

\newtheorem{coro}{Corollary}[section]

\newtheorem{Remark}{Remark}[section]

\newtheorem{Problem}{Problem}[section]

\title{Partial Petrial Polynomials of Ribbon Graphs}

\author{Xiaoxiang Yu$^{a}$, Rong-Xia Hao$^{a }$\thanks{Corresponding author}, Jianbing Liu$^{a }$, Zhiguo Li$^{b}$,\\
	\small $^{a}$School of Mathematics and Statistics,\\
	\small Beijing Jiaotong University, Beijing 100044, P. R. China\\
	\small $^b$School of Science,\\
	\small Hebei University of Technology, Tianjin 300401, P. R. China\\
	\small Email: yuxiaoxiang1126@163.com, rxhao@bjtu.edu.cn,\\ \small jbliu1@bjtu.edu.cn, zhiguolee@hebut.edu.cn}
\date{}

\begin{document}
\baselineskip 0.65cm
\maketitle

\vspace{-2.6em}
\begin{abstract}
Gross, Mansour, and Tucker [European J. Combin., 95 (2021): 103329] introduced the \emph{partial Petrial polynomial} of a ribbon graph $G$, denoted by $^{\partial}{\varepsilon^{\times}_{G}}(z)$. Beck and Mellor proved, in both orientable and non-orientable cases  respectively, that the Euler genus of a bouquet equals the rank of a certain matrix over $\mathbb{GF}(2)$.
%Recently, the authors gave an equivalent representation of the partial Petrial polynomial of bouquets (ribbon graphs with one vertex) using the matrix rank. 
In this paper, we first generalize Beck and Mellor's results from bouquets to all ribbon graphs. Secondly, 
we give an equivalent representation of the partial Petrial polynomial for %from bouquets to 
all ribbon graphs. Specifically, the partial Petrial polynomial of a ribbon graph $G$ with $n$ vertices is equal to the sum of this polynomial for $2^{n-1}$ distinct bouquets. Moreover, we give the definition of a modified partial Petrial polynomial by assigning coefficients $+1$ or $-1$ to the terms in the partial Petrial polynomial such that the resulting polynomial satisfies the four-term relation for graphs. Finally, we generalize the modified partial Petrial polynomial from bouquets to all signed simple graphs and prove that this polynomial is $4$-invariant, which provides an answer to the problem posed by Lando [J.~Combin.~Theory Ser.~B,~80~(1) (2000): 104-121]: Which of the known graph invariants are $4$-invariants?
\vspace{3mm}

\noindent\textbf{Keywords:} Partial Petrial polynomial, ribbon graph, matrix rank, Euler genus, four-term relation, intersection graph

\noindent\textbf{2020 MSC:} 05C10, 05C30, 05C31, 57M15
\end{abstract}

\section{Introduction}
A {\em ribbon graph} $G$ is a surface with boundary represented as the union of two sets of closed topological discs, called vertex-discs $V(G)$ and edge-ribbons $E(G)$, satisfying three  conditions \cite{2002_Ribbon graph}:
\begin{itemize}	
\vspace{-0.5em}
\item [(1)] the vertex-discs and edge-ribbons intersect in disjoint line segments;
\vspace{-0.5em}
\item [(2)] each such line segment lies on the boundary of precisely one vertex-disc and one edge-ribbon;
\vspace{-2em}
\item [(3)] every edge-ribbon contains exactly two such line segments.
\end{itemize}

\vspace{-0.5em}
\noindent This definition equivalently represents a graph cellularly embedded in a surface. %This definition provides an equivalent representation of a graph cellularly embedded in a surface. 
%Throughout 
In this paper, all ribbon graphs considered are connected multigraphs (i.e., may contain loops and parallel edges).\\
\indent The \textit{Petrial} operation  was first introduced by Wilson in 1979 \cite{Introduction_Petrial}. Specifically, the Petrial of a ribbon graph $G$, denoted $G^{\times}$, is obtained by detaching one end of each edge-ribbon from its incident vertex-disc, applying a twist to the edge-ribbon, and reattaching it to the vertex-disc. When the Petrial operation is applied only to a subset $A \subseteq E(G)$, the resulting ribbon graph is called the \textit{partial Petrial} of $G$ with respect to $A$, and is denoted by $G^{\times|A}$. In 2021, Gross, Mansour, and Tucker \cite{2021_II} introduced the \textit{partial Petrial polynomial}, which enumerates all partial Petrials of a ribbon graph according to their Euler genera $\varepsilon(G^{\times|A})$. It is defined as follows.

\vspace{-0.3em}
\begin{defi}\emph{(\cite{2021_II})}\label{poly_Petrial}
The partial Petrial polynomial of a ribbon graph $G$ is defined as
\vspace{-0.3em}
\[^{\partial}{\varepsilon^{\times}_{G}}(z)=\sum\limits_{A\subseteq E(G)}z^{\varepsilon{(G^{\times|A})}},\]

\vspace{-0.3em}
\noindent where the sum enumerates partial Petrials of $G$ by Euler genus.
%The partial Petrial polynomial of any ribbon graph $G$ is the generating~function
%$$^{\partial}{\varepsilon^{\times}_{G}}(z)=\sum\limits_{A\subseteq E(G)}z^{\varepsilon{(G^{\times|A})}}$$
%\noindent that enumerates partial Petrials of $G$ by Euler genus. 
\end{defi}

\vspace{-0.6em}
\noindent In~\cite{2021_II},~it was shown that the polynomial~$^{\partial}{\varepsilon^{\times}_{G}}(z)$ is \emph{interpolating},~meaning that its nonzero coefficients correspond to consecutive powers of the variable.~They also introduced the restricted orientable partial Petrial polynomial of $G$ by restricting the generating function to $A\subseteq E(G)$ such that $G^{\times|A}$ is orientable and asked whether the restricted-orientable partial Petrial polynomial of any ribbon graph is even-interpolating. Later, Chen et~al.~\cite{Counterexample_ChenYichao}~gave a negative answer.\\
\indent The {\em join} $G_1\vee G_2$ of two ribbon graphs $G_1$ and $G_2$ is obtained by selecting an arc $p_i$ on the boundary of a vertex-disc in each $G_i$ for $i=1,2$ between two consecutive ribbon ends and identifying  $p_1$ and $p_2$ to merge the two vertex-discs into one \cite{Join_Moffatt}.~A ribbon graph $G$ is called {\em prime} if there do not exist non-empty subgraphs $G_1,G_2$ such that $G=G_1\vee G_2$.~Gross, Mansour, and Tucker \cite{2021_II} proved that partial Petrial polynomials respect the join operation.~Specifically,~$^{\partial}{\varepsilon^{\times}_{G_1\vee G_2}}(z)=^{\partial}{\varepsilon^{\times}_{G_1}}(z)\cdot ^{\partial}{\varepsilon^{\times}_{G_2}}(z)$.~Thus the study of partial Petrial polynomials for ribbon graphs reduces to its study for prime ribbon graphs.\\%~%Thus to study the partial Petrial polynomial of ribbon graph, it suffices to study that of prime ribbon graph. 
%Furthermore, they derived explicit formulas and recursive relations for certain families of ribbon graphs, including ladder graphs.
\indent A~\textit{bouquet}~is a ribbon graph with one vertex.~Yan and Li \cite{Petrial_Yan} showed that the partial Petrial polynomial of a bouquet is determined by its intersection graph.~This means that two bouquets with the same intersection graph have identical partial Petrial polynomials. \\%They also computed this polynomial for bouquets whose intersection graphs are either complete graphs or  paths. %Their results showed that the intersection graph is a complete graph $K_n$ if and only if the polynomial has non-zero coefficients for all terms from degree 1 to $n$.
%THEOREM:
%We include the known partial Petrial polynomial for complete graphs, which will be useful for comparison.
%Based on these findings, they posed the following problem: 
%If the partial Petrial polynomial of a connected graph $G$ is a binomial, must $G$ necessarily be a path? If not, can we characterize the graphs whose partial Petrial polynomials are binomials? 
%Recently, Feng, Yan and Zheng \cite{binomial_Yan} provided a positive answer for this problem.
%Recently, we extended the work of \cite{Petrial_Yan} by determining the partial Petrial polynomial for bouquets whose intersection graphs are cycles. We also provided a complete characterization of the prime bouquets for which the lowest degree among the nonzero coefficients in the partial Petrial polynomial is $2$. Furthermore, we explicitly computed $^{\partial}{\varepsilon_{B_n}^{\times}}(z)$ if the lowest-degree term of  $^{\partial}{\varepsilon_{B_n}^{\times}}(z)$ is of degree $2$. 
\indent A \emph{chord diagram} of degree $n$ consists of an oriented circle with $n$ chords whose $2n$ endpoints are all distinct. A \emph{framed} chord diagram assigns a framing in $\mathbb{Z}_2$ to each chord; chords with framing $0$ (resp. $1$) are called untwisted (resp. twisted). In this paper, solid (resp. dashed) lines represent framing $0$ (resp. $1$). %\indent A \emph{chord diagram} of degree $n$ is an oriented circle, together with $n$ chords of the circles, such that all of the $2n$ endpoints of the chords are distinct. A chord diagram is \emph{framed} if a map (a framing) from the set of chords to $\mathbb{Z}_2$ is given. The chords with framing $0$ (resp. $1$) are said to be untwisted (resp. twisted). In this paper, a solid line (resp. dashed line) represents a chord with framing $0$ (resp. $1$). 
Chord diagram is a concept in knot theory, which is an equivalent representation of a bouquet. In other words, a chord diagram uniquely corresponds to a bouquet. The functions on chord diagrams satisfying the four-term relation play a key role in the theory of Vassiliev knot invariants. In \cite{problemfourtermrelation}, Lando posed the following problem: 

\begin{Problem} \emph{\cite{problemfourtermrelation}}\label{Problem_4-invariants}
Which of the known graph invariants are $4$-invariants\emph{?}
\end{Problem}
\indent In 2021, Gross, Mansour, and Tucker \cite{2020_I} introduced the \textit{partial dual Euler genus polynomial}, which enumerates all partial duals of a ribbon graph according to their Euler genera. For this polynomial, \noindent Chmutov \cite{fourterm_Chmutov} and Deng et al. \cite{fourterm_DengYan}  proved that the four-term relation holds for orientable and non-orientable cases, respectively. Later, Cheng \cite{fourterm_chengzhiyun}  generalized the above  polynomial from ribbon graphs to arbitrary simple graphs  without referring to embeddings, and it was demonstrated that the generalized polynomial is $4$-invariant.\\
\indent Beck \cite{Beck} proved that the Euler genus of an orientable bouquet equals the rank of a certain matrix over $\mathbb{GF}(2)$. Later, Mellor \cite{weight} extended this result to all non-orientable bouquets. \\
\indent In this paper, we first give an auxiliary bouquet (Definition  \ref{Constructbouquet}) of any ribbon graph $G$ and prove that the Euler genus of $G$ equals the rank of the adjacency matrix of the signed intersection graph of this auxiliary bouquet over $\mathbb{GF}(2)$ (Theorem \ref{G=aux(G,T)}), which extends the result introduced by Beck and Mellor from bouquets to all ribbon graphs. Specifically, 

\begin{thm}\label{G=aux(G,T)}
	Let $G$ be a connected ribbon graph % with $n$ $(n\geq 2)$ vertices. 
	and let $T$ be any given spanning tree of $G$.~Then
	$$\varepsilon(G) = \text{rank}(adj(SI(Aux(G,T)))).%_{\mathbb{Z}_2}.
	$$
\end{thm}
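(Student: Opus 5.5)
We will reduce the statement to the bouquet case of Beck and Mellor by contracting the spanning tree $T$. Recall that for a connected ribbon graph, contracting a non-loop edge $e$ (merging its two end vertex-discs together with the ribbon $e$ into a single disc) preserves the underlying surface. It therefore leaves the number of boundary components, orientability and Euler genus unchanged, while decreasing $v$ and $e$ by one each. Contracting all edges of $T$ yields a bouquet $G/T$ with $\varepsilon(G/T)=\varepsilon(G)$ and with $|E(G)|-|V(G)|+1$ edges, namely the non-tree edges of $G$. We expect Definition~\ref{Constructbouquet} of $Aux(G,T)$ to be exactly this bouquet, or a bouquet with the same signed intersection graph. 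In the latter case the key point is that the cyclic order of ribbon ends around the merged vertex, and the twisting of each non-tree edge, are read off by traversing the boundary of the disc $T$ (equivalently, the fundamental cycle $C_e$ of each non-tree edge $e$ in $T$). A non-tree edge becomes twisted precisely when its fundamental cycle contains an odd number of twisted edges, after accounting for the vertex orientation choices along $T$.

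The steps are as follows. First, verify that $Aux(G,T)$ coincides with $G/T$ up to signed intersection graph. Two chords interlace in $G/T$ exactly when their ends alternate along the boundary walk of the tree-disc, and the sign of a chord is the parity of twists along its fundamental cycle. Second, invoke the contraction invariance $\varepsilon(G)=\varepsilon(G/T)$. One can prove this by Euler's formula $\varepsilon=2k-v+e-f$ together with the fact that contracting a non-loop edge does not change $f$ (the boundary components of the surface are the same) or orientability. Third, apply Beck's theorem in the orientable case and Mellor's theorem in the non-orientable case, which give $\varepsilon(B)=\operatorname{rank}(adj(SI(B)))$ over $\mathbb{GF}(2)$ for any bouquet $B$, where the diagonal entries record twisted loops. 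Chaining these gives $\varepsilon(G)=\varepsilon(Aux(G,T))=\operatorname{rank}(adj(SI(Aux(G,T))))$. Since the construction works for every spanning tree, the statement holds for any given $T$.

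The main obstacle is the first step: matching the combinatorial definition of $Aux(G,T)$ with the topological contraction, especially the twist bookkeeping. When a twisted tree edge is contracted, the orientation of one endpoint's disc flips. This toggles the twist status of every other edge incident to that side and reverses its cyclic order. We would handle this by induction on $|E(T)|$, contracting one leaf edge of $T$ at a time. At each step we check that the signs and the interlacements (reversal of an interval of ends preserves interlacing between chords) agree with the ones prescribed by Definition~\ref{Constructbouquet}.
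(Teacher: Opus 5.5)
Your proof is essentially the paper's. There $Aux(G,T)$ is \emph{defined} as the contraction $G/T$, and $\varepsilon(G)=\varepsilon(Aux(G,T))$ comes from contraction not changing the surface (Lemma \ref{ver(G)=ver(aux(G,T))}); Lemma \ref{rank=genusofbouquet} (Beck--Mellor) then finishes. So your ``first step'' and the whole final paragraph on twist bookkeeping are unnecessary, which is fortunate: the parenthetical claim there is false as stated, because reversing an interval toggles the interlacement of two chords that each have exactly one end in the interval (e.g.\ two edges parallel to a contracted twisted edge).
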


Second, we derive an equivalent representation of the partial Petrial polynomial for all ribbon graphs. Specifically, the partial Petrial polynomial of a ribbon graph $G$ with $n$ vertices is equal to the sum of this polynomial for $2^{n-1}$ distinct bouquets, and further provide an explicit expression based on the rank of the matrix using the result introduced by Beck and Mellor (Theorem \ref{rankofpolyPetrial_ribbon}).

\begin{thm}\label{rankofpolyPetrial_ribbon}
	Let $G$ be a connected ribbon graph with $n$ $(n\geq 2)$ vertices. Assume that $T$ is  any spanning tree of $G$. Then the partial Petrial polynomial of $G$ has an equivalent expression %can be equivalently expressed as follows
	\[^{\partial}{\varepsilon_{G}^{\times}}(z)=\sum_{X \subseteq E(T)}  {}^{\partial}{\varepsilon_{Aux(G_X,T)}^{\times}}(z) =\sum_{X \subseteq E(T)} \sum_{Y \subseteq E(G)\backslash E(T)} z^{\text{{rank}}(adj(I(Aux(G_X,T)))+\mathbb{D}_Y)%_{\mathbb{Z}_2}
	},\]
	where $G_X$ is obtained from $G$ by applying a twist to all edges in $X$ and $\mathbb{D}_{Y}$ is defined as the $|E(G)| \times |E(G)|$ diagonal matrix whose $(i,i)$-th entry is $1$ if $i \in Y$ and $0$ otherwise.
\end{thm}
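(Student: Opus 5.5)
We split the sum defining $^{\partial}{\varepsilon^{\times}_{G}}(z)$ according to how $A$ meets the spanning tree. Write every $A\subseteq E(G)$ uniquely as $A=X\cup Y$ with $X=A\cap E(T)$ and $Y=A\setminus E(T)\subseteq E(G)\setminus E(T)$. Then
\[{}^{\partial}{\varepsilon^{\times}_{G}}(z)=\sum_{X\subseteq E(T)}\sum_{Y\subseteq E(G)\setminus E(T)} z^{\varepsilon\left(G^{\times|(X\cup Y)}\right)},\]
and partial Petrials on disjoint edge sets commute. The task is therefore to identify, for fixed $X$, the inner sum with ${}^{\partial}{\varepsilon^{\times}_{Aux(G_X,T)}}(z)$. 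Here we read $G_X$ as $G^{\times|X}$: twisting an edge ribbon is exactly the Petrial on that edge.

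\textbf{Tree edges: contraction.} The auxiliary bouquet of Definition~\ref{Constructbouquet} is obtained, as I understand it, by contracting the tree $T$. Contracting a tree edge (a non-loop) in a ribbon graph preserves the Euler genus and realizes the surface-level operation used in Theorem~\ref{G=aux(G,T)}. Applying Theorem~\ref{G=aux(G,T)} to the ribbon graph $G_X^{\times|Y}$ with the same spanning tree $T$ gives
\[\varepsilon\left(G_X^{\times|Y}\right)=\operatorname{rank}\left(adj(SI(Aux(G_X^{\times|Y},T)))\right).\]

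\textbf{Non-tree edges: commuting with $Aux$.} The key local check is
\[Aux(G_X^{\times|Y},T)=Aux(G_X,T)^{\times|Y}.\]
A non-tree edge $e$ survives as a loop of the auxiliary bouquet. Its twist status is recorded by the framing of the corresponding chord, and this framing is the parity of the twists along the fundamental cycle of $e$ with respect to $T$, which includes $e$'s own twist. So twisting $e$ in $G_X$ flips exactly the framing of chord $e$ and changes nothing else: the interlacing pattern of chord endpoints is unchanged by a Petrial on $e$. Tree edges do not appear as chords. Hence, if $Aux$ has $|E(G)|-n+1$ chords, summing over $Y$ gives exactly ${}^{\partial}{\varepsilon^{\times}_{Aux(G_X,T)}}(z)$, which is the first equality.

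\textbf{Second equality.} For each bouquet $B=Aux(G_X,T)$ we use the Beck–Mellor rank description. The signed intersection graph of $B^{\times|Y}$ has the same underlying intersection graph as $B$, with diagonal signs flipped on $Y$. Over $\mathbb{GF}(2)$ this means
\[adj(SI(B^{\times|Y}))=adj(SI(B))+\mathbb{D}_Y.\]
This sits slightly awkwardly against the statement's $adj(I(\cdot))$ and its $|E(G)|\times|E(G)|$ size. I will interpret the statement as follows. Index rows by all edges; tree-edge rows of the non-signed matrix are zero; and the framing diagonal of $Aux(G_X,T)$ is absorbed by reindexing $Y$. The map $Y\mapsto Y\,\triangle\,(\text{twisted chords})$ is a bijection on subsets of $E(G)\setminus E(T)$, so the sum over $Y$ can be taken with $adj(I)$ in place of $adj(SI)$. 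Zero rows contribute nothing to the rank.

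\textbf{Main obstacle.} The hardest step is the commutation $Aux(G_X^{\times|Y},T)=Aux(G_X,T)^{\times|Y}$. This depends on the exact details of Definition~\ref{Constructbouquet}: how the chord endpoints are ordered around the single vertex after contracting $T$, and how twisted tree edges reverse the cyclic order of the ribbon ends in a subtree. I would first handle contraction of a single tree edge. For an untwisted non-loop edge, contraction merges the two rotation systems. For a twisted one, contraction reverses one side and toggles the twist of every edge incident there. Then I would check that neither operation interacts with twisting a different edge $e\notin E(T)$, and induct on $n$.
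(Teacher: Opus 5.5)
Your proposal is correct and follows essentially the same route as the paper. Both split $A=X\cup Y$ with $X=A\cap E(T)$, use invariance of the Euler genus under contracting $T$, use the commutation $Aux((G_X)^{\times|Y},T)=Aux(G_X,T)^{\times|Y}$, and finish with the Beck--Mellor rank formula for bouquets. Your additions are helpful: the paper only asserts the commutation in a remark, while you justify it via the parity of twists on fundamental cycles; your reindexing $Y\mapsto Y\,\Delta\,(\text{twisted chords})$ and the zero-padding observation handle the $I$ versus $SI$ and matrix-size points the paper passes over; and the paper's preliminary reduction to an all-untwisted $G$ is not needed.
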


Furthermore, for the chord diagram corresponding to a bouquet, we give an equivalence relation for the partial Petrial polynomial (Theorem \ref{pPpva}), which is similar to the four-term relation. 

\begin{thm}\label{pPpva}
	Let $B$ be a chord diagram and let $a$ and $b$ be two adjacent chords of $B$. Then
	\[
	^{\partial}{\varepsilon_{B}^{\times}}(z) -
	^{\partial}{\varepsilon_{\widetilde{B}_{a,b}}^{\times}}(z) - ^{\partial}{\varepsilon_{\widetilde{B^{\prime}}_{a,b}}^{\times}}(z) +^{\partial}{\varepsilon_{B^{\prime}_{a,b}}^{\times}}(z)=0.
	\]
\end{thm}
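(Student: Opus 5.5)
The plan is to reduce the identity to a statement about ranks of symmetric matrices over $\mathbb{GF}(2)$, using Beck and Mellor's result in the form of Theorem~\ref{rankofpolyPetrial_ribbon} for one vertex. For a chord diagram $B$ with intersection graph $I(B)$ and adjacency matrix $A_B=adj(I(B))$, a partial Petrial only toggles the framings of the chords in $A$. The framings are exactly the diagonal entries of the signed adjacency matrix, so the rank formula gives
\[
^{\partial}{\varepsilon_{B}^{\times}}(z)=\sum_{Y\subseteq E(B)} z^{\text{rank}(A_B+\mathbb{D}_Y)} .
\]
In particular, the polynomial depends only on the off-diagonal part $A_B$, in agreement with Yan and Li. All four diagrams in Theorem~\ref{pPpva} have the same chord set, so it suffices to produce a rank-preserving bijection
\[
\{(\widetilde{B}_{a,b},Y)\}\cup\{(\widetilde{B'}_{a,b},Y)\}\;\longleftrightarrow\;\{(B,Y)\}\cup\{(B'_{a,b},Y)\}.
\]

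Next I record how the two moves act on $A_B$. Swapping the adjacent endpoints of $a$ and $b$ (the passage $B\mapsto B'_{a,b}$) toggles only the entry $(a,b)$, because it changes only whether $a$ and $b$ interlace. Sliding an end of $a$ along $b$ (the passage $B\mapsto\widetilde{B}_{a,b}$) makes $a$ interlace with exactly those chords $c\neq a,b$ that interlace $a$ or $b$ but not both. Thus the row and column of $a$ become the sum of those of $a$ and $b$ off the $\{a,b\}$ block, and the entry $(a,b)$ is handled by the convention that distinguishes $\widetilde{B}$ from $\widetilde{B'}$.

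The algebraic core is a congruence. Let $P=I+e_be_a^{T}$ and let $M=A+\mathbb{D}_Y$ be symmetric over $\mathbb{GF}(2)$. Then $P^{T}MP$ has the same rank as $M$ and agrees with $M$ outside row and column $a$. Its $(a,c)$ entry is $M_{ac}+M_{bc}$ for $c\neq a,b$, its $(a,a)$ entry is $M_{aa}+M_{bb}$ (the cross terms cancel in characteristic $2$), and its $(a,b)$ entry is $M_{ab}+M_{bb}$. Therefore, for $M=A_B+\mathbb{D}_Y$:
\begin{itemize}
\item the off-diagonal part of $P^{T}MP$ is $A_{\widetilde{B}}$ or $A_{\widetilde{B'}}$, according to whether the framing $[b\in Y]$ leaves the $(a,b)$ entry alone or toggles it;
\item its diagonal is $\mathbb{D}_{Y'}$, where $Y'=Y\,\triangle\,\{a\}$ if $b\in Y$ and $Y'=Y$ otherwise.
\end{itemize}
Applying the same congruence to $B'_{a,b}$ sends it to the other member of $\{\widetilde{B},\widetilde{B'}\}$. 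Since $(X,Y)\mapsto P^{T}(A_X+\mathbb{D}_Y)P$ is invertible ($P^{2}=I$), it is the required bijection. Summing $z^{\text{rank}}$ then gives $^{\partial}\varepsilon_B^\times+{}^{\partial}\varepsilon_{B'}^\times={}^{\partial}\varepsilon_{\widetilde B}^\times+{}^{\partial}\varepsilon_{\widetilde{B'}}^\times$, which is the claim.

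The main obstacle is the second step: checking that the paper's definitions of $\widetilde{B}_{a,b}$ and $\widetilde{B'}_{a,b}$ yield exactly these off-diagonal matrices. In particular I must check that the $(a,b)$ entry of the slid diagram is governed by the framing of $b$ in the way that makes the case split close up. To settle this I would draw the four local pictures near the adjacent endpoints of $a$ and $b$ for both framings of $b$, and compare interlacings directly. If the conventions turn out reversed, only the pairing of $\widetilde{B}$ versus $\widetilde{B'}$ with $B$ versus $B'$ changes, not the conclusion.
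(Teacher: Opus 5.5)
Your proof is correct. The underlying bijection is the same as the paper's, but you justify it algebraically where the paper argues topologically.

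\textbf{The paper's route.} It uses Lemma~\ref{Petrial_orientable} and the case table of Fig.~\ref{B_slide_exchange} to reduce to $a,b$ non-interlaced with all chords untwisted. It then splits the subsets $A\subseteq E(B)$ by membership of $a$ and $b$, and invokes Lemma~\ref{slide}. Sliding $a$ along $b$ inside $B^{\times|A}$ gives $\widetilde{B}_{a,b}{}^{\times|A}$ when $b\notin A$. It gives $\widetilde{B'}_{a,b}{}^{\times|(A\Delta\{a\})}$ when $b\in A$, because sliding along a twisted ribbon flips both the $a$--$b$ interlacement and the framing of $a$. The same holds symmetrically for $B'_{a,b}$.

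\textbf{Your route.} Your congruence $M\mapsto P^{T}MP$ with $P=I+e_be_a^{T}$ is the algebraic shadow of this handle slide. It keeps $Y$ when $b\notin Y$ and replaces it by $Y\Delta\{a\}$ when $b\in Y$, so the pairing is identical. Rank invariance under congruence replaces invariance of the surface under slides. The Beck--Mellor formula (Lemma~\ref{rank=genusofbouquet}) replaces the reduction via Lemma~\ref{Petrial_orientable}.

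\textbf{What each buys.} Your version needs no case split on framings or on whether $a,b$ interlace, and no reading of pictures. It also shows that $f(G)=\sum_{Y}z^{\text{rank}(\text{adj}(G)+\mathbb{D}_Y)}$ satisfies $f(G)+f(G'_{a,b})=f(\widetilde{G}_{a,b})+f(\widetilde{G}'_{a,b})$ for every simple graph, not only circle graphs. It is essentially the unsigned counterpart of the paper's own proof of Theorem~\ref{rankoffourterm}. The paper's version needs only that slides preserve the surface, not the rank theorem.

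\textbf{Two small remarks.}

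First, the ``main obstacle'' you flag is not a real one. The right-hand side is symmetric in $\widetilde{B}_{a,b}$ and $\widetilde{B'}_{a,b}$. So you only need that the unordered pair $\{\text{adj}(I(\widetilde{B}_{a,b})),\text{adj}(I(\widetilde{B'}_{a,b}))\}$ consists of $P^{T}A_BP$ and that matrix with its $(a,b)$ and $(b,a)$ entries toggled. This follows from the slide rule for chords $c\neq a,b$ together with the paper's remark $(\widetilde{B}_{a,b})'_{a,b}=\widetilde{(B'_{a,b})}_{a,b}$. No picture-checking is required. For the record, twisting $b$ does swap the two members, which is what the paper's colour-coding records.

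Second, Theorem~\ref{rankofpolyPetrial_ribbon} assumes $n\ge 2$. For a bouquet, cite Lemma~\ref{rank=genusofbouquet} directly and reindex by $Y=T\Delta A$, where $T$ is the set of twisted chords. Lemma~\ref{Petrialofbouquet-rank} gives the same formula but is stated only for prime bouquets, although primality is never used.
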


Inspired by Theorem \ref{pPpva}, we define a modified partial Petrial polynomial by 
assigning coefficients +1 or -1 to the terms in the partial Petrial polynomial such that the modified polynomial satisfies the four-term relation (Definition \ref{modifiedpartialPetrial polynomial}). 
Later, we extend the concept of the modified partial Petrial polynomial from signed intersection graphs %\emph{circle graphs} 
to all simple signed graphs without referring to embeddings (Definition \ref{extendpPp}). %We prove that this extended  polynomial satisfies the four-term relation, . %This modified polynomial is determined by its intersection graph without referring to chord diagrams. 
Finally, we %develop the definition of the modified polynomial from circle graphs to all graphs and 
prove that the generalized modified partial Petrial polynomial is $4$-invariant (Theorem \ref{rankoffourterm}), which provides an answer to Problem \ref{Problem_4-invariants}.

\begin{thm}\label{rankoffourterm}
	The {modified partial Petrial polynomial} of any simple signed graph $S$ is $4$-invariant.
	%For any simple signed graph $S$ and any pair of vertices $a, b \in V(S)$, the modified partial Petrial polynomial $^{M\partial}{\varepsilon^{\times}_{S}}(z)$ of  $S$ satisfies the following four-term relation:
	%\[^{M\partial}{\varepsilon_{S}^{\times}}(z) -^{M\partial}{\varepsilon_{\widetilde{S}_{a,b}}^{\times}}(z) + ^{M\partial}{\varepsilon_{\widetilde{S^{\prime}}_{a,b}}^{\times}}(z) - ^{M\partial}{\varepsilon_{S^{\prime}_{a,b}}^{\times}}(z)=0.\]
\end{thm}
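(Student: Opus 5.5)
The plan is to work entirely with matrices over $\mathbb{GF}(2)$. I assume, as Definition \ref{extendpPp} suggests, that for a simple signed graph $S$ with adjacency matrix $A=adj(S)$ (the diagonal records the signs of the vertices), the modified polynomial has the form
\[
{}^{M\partial}{\varepsilon^{\times}_{S}}(z)=\sum_{Y\subseteq V(S)}\sigma_S(Y)\,z^{\mathrm{rank}(A+\mathbb{D}_Y)},
\]
with signs $\sigma_S(Y)=\pm1$. The four-term relation then reduces to a statement about ranks of four families of matrices. Fix the two vertices $a,b$. I expect the graphs $\widetilde{S}_{a,b}$ and $\widetilde{S^{\prime}}_{a,b}$ to be realized, up to the entry in position $(a,b)$ and the diagonal entry at $a$, by the congruence $M\mapsto EME^{T}$ with $E=I+e_ae_b^{T}$. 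This congruence adds row $b$ to row $a$ and column $b$ to column $a$, and it preserves rank.

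The first step is to compute exactly how this congruence acts on $M=A+\mathbb{D}_Y$ over $\mathbb{GF}(2)$.
\begin{itemize}
\item For $x\neq a,b$, the entry $(a,x)$ becomes $M_{ax}+M_{bx}$. This is precisely the toggling of adjacencies between $a$ and $N(b)\setminus\{a\}$ that defines $\widetilde{S}_{a,b}$.
\item The entry $(a,a)$ becomes $M_{aa}+M_{bb}$, since the cross terms $2M_{ab}$ vanish.
\item The entry $(a,b)$ becomes $M_{ab}+M_{bb}$.
\end{itemize}
So $EME^T$ is the matrix of $\widetilde{S}_{a,b}$ or of $\widetilde{S^{\prime}}_{a,b}$, plus some diagonal $\mathbb{D}_{Y^{*}}$. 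Which graph occurs depends on whether $M_{bb}=A_{bb}+[b\in Y]$ is $0$ or $1$. The set $Y^{*}$ agrees with $Y$ off $a$, and $a\in Y^*$ iff $[a\in Y]+M_{bb}$ changes appropriately.

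The second step is a bijection of index sets. Fix $Y_0=Y\setminus\{a,b\}$ and $y_b=[b\in Y]$, and let $y_a$ range over $\{0,1\}$. The shift of the $(a,a)$ entry is absorbed by the relabeling $y_a\mapsto y_a+M_{bb}$, which is a bijection. This gives two cases.
\begin{itemize}
\item If $M_{bb}=0$, the terms of $S$ correspond rank-for-rank to the terms of $\widetilde{S}_{a,b}$, and those of $S'_{a,b}$ to those of $\widetilde{S^{\prime}}_{a,b}$.
\item If $M_{bb}=1$, the correspondence is crossed: $S\leftrightarrow\widetilde{S^{\prime}}_{a,b}$ and $S'_{a,b}\leftrightarrow\widetilde{S}_{a,b}$.
\end{itemize}
Summing over all $Y$ proves the unsigned identity ${}^{\partial}\varepsilon^{\times}_S+{}^{\partial}\varepsilon^{\times}_{S'}={}^{\partial}\varepsilon^{\times}_{\widetilde S}+{}^{\partial}\varepsilon^{\times}_{\widetilde{S'}}$. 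This generalizes Theorem \ref{pPpva} from circle graphs to arbitrary signed graphs. Note that nowhere was realizability by a chord diagram used.

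The third step, which I expect to be the main obstacle, is the sign bookkeeping. I must check that the signs $\sigma$ of Definition \ref{extendpPp} are compatible with the bijection. Corresponding terms in the case $M_{bb}=0$ must carry equal signs, so that they cancel in $f(S)-f(\widetilde S)$. Corresponding terms in the crossed case $M_{bb}=1$ must carry signs that make them cancel in the combination $f(S)-f(\widetilde S)+f(\widetilde{S'})-f(S')$.

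My first attempt will be to verify directly how $\sigma$ changes under the two modifications relevant here:
\begin{itemize}
\item the relabeling $y_a\mapsto y_a+M_{bb}$;
\item toggling the edge $ab$.
\end{itemize}
Presumably $\sigma$ involves the parity of $|Y|$, or the sign of $b$ together with $[b\in Y]$, together with the adjacency of $a$ and $b$. I would then confirm that the resulting sign change is exactly $(-1)^{M_{bb}}$, relative to the pairing. Once this is in place, the relation follows term by term for each fixed $(Y_0,y_b)$. Summing over these choices completes the proof that the modified partial Petrial polynomial is $4$-invariant.
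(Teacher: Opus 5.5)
Your Steps 1 and 2 are the paper's argument, but you leave the decisive sign step undone, and the sign change you expect holds only under a convention the paper adopts silently. The correspondence is this: the congruence by $E=I+e_ae_b^{T}$ is Lemma \ref{adj(slide)}, your dichotomy on the diagonal entry at $b$ is the paper's split of the full diagonal $Y=X\Delta A$ into $b\notin Y$ ($V_1'\cup V_2'$) and $b\in Y$ ($V_3'\cup V_4'$), and your direct and crossed pairings are the rank identities \eqref{Eq4}--\eqref{Eq7}. That only yields the unsigned identity. The part specific to the modified polynomial is your Step 3, which you do not carry out: you leave $\sigma$ unspecified even though Definition \ref{extendpPp} fixes it. Write $f(S)$ for ${}^{M\partial}\varepsilon^{\times}_{S}(z)$. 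The paper closes the step as follows.
\begin{itemize}
\item In \eqref{eq_2}--\eqref{eq_3}, every term of all four polynomials is weighted by $(-1)^{|Y\Delta X|}$, with one and the same $X$, namely the negative vertices of $S$.
\item For $b\notin Y$, paired terms have equal weights, so by \eqref{Eq4} and \eqref{Eq5} the $V_1'\cup V_2'$ parts of $f(S)+f(\widetilde{S'}_{a,b})$ and of $f(\widetilde S_{a,b})+f(S'_{a,b})$ coincide.
\item For $b\in Y$, the pairing $Y\leftrightarrow Y\Delta\{a\}$ of \eqref{Eq6} and \eqref{Eq7} flips the weight. So the $V_3'\cup V_4'$ part of each side vanishes on its own: its $V_4'$ sum is minus its $V_3'$ sum.
\end{itemize}

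This is also why Step 3 cannot simply be ``confirmed''. A sign change of exactly $(-1)^{M_{bb}}$ is what the common-$X$ bookkeeping produces. Your relabeling $y_a\mapsto y_a+M_{bb}$ presupposes it, because it assumes $a$ has the same sign in $\widetilde S_{a,b}$ as in $S$. But $\widetilde S_{a,b}$ flips the sign of $a$ when $b$ is negative.

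Read Definition \ref{extendpPp} with $\text{adj}(S)$ having zero diagonal, as it must be to extend the bouquet case. Then $f(S)=\sum_{A}(-1)^{|A|}z^{\text{rank}(M_S+\mathbb{D}_A)}$, where $M_S$ carries the signs on its diagonal. Once the flip is honoured, the correct relabeling is $A\mapsto A\Delta\{a\}$ exactly when $b\in A$. The weight therefore changes by $(-1)^{[b\in A]}$, which for negative $b$ equals $(-1)^{M_{bb}+1}$. Your direct pairs then carry opposite signs, and the relation genuinely fails. Take $S$ on $\{a,b\}$ with no edge, $a$ positive and $b$ negative; then
\[
f(S)=-(1-z)^2,\quad f(\widetilde S_{a,b})=(1-z)^2,\quad f(\widetilde{S'}_{a,b})=z-z^2,\quad f(S'_{a,b})=z^2-z,
\]
and the alternating sum is $-2(1-z)(1-2z)\neq 0$.

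So before Step 3 you must fix a convention, and there are two options.
\begin{itemize}
\item Weight all four graphs with the single $X$, as the paper does. This amounts to not flipping the sign of $a$.
\item More naturally, define $M_{\widetilde S_{a,b}}=EM_SE^{T}$. This also toggles $ab$ when $b$ is negative, which is what a slide along a twisted chord does, and it is the entry $x_5+x_6$ in the paper's block formula. Then split on $[b\in A]$ rather than on $M_{bb}$. The matrix $E(M_S+\mathbb{D}_A)E^{T}$ equals $M_{\widetilde S_{a,b}}+\mathbb{D}_A$ if $b\notin A$, and $M_{\widetilde{S'}_{a,b}}+\mathbb{D}_{A\Delta\{a\}}$ if $b\in A$. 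The resulting sign change $(-1)^{[b\in A]}$ is exactly what the cancellation requires.
\end{itemize}
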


The rest of this paper is organized as follows. In Section $2$, we introduce the necessary notations and foundational properties used throughout the paper. In Section $3$, the proofs of Theorems \ref{G=aux(G,T)} and  \ref{rankofpolyPetrial_ribbon} are given. The proofs of Theorems \ref{pPpva} and \ref{rankoffourterm} are provided in Section~4.%In section $4$, we devote to the proofs of Theorems \ref{pPpva} and \ref{rankoffourterm}.

\section{Preliminary}
This section collects the foundational definitions and lemmas needed for the main results.  
Unless stated otherwise, all graphs are assumed to be finite and undirected. Throughout this paper, let $\mathbb{Z}_n$ be the set $\{1,2,\cdots,n\}$, and let all matrices be defined over $\mathbb{GF}(2)$. 
\subsection{Definitions and notations}
%In this subsection, we review several key definitions and notations that are essential for our development. \\
%\indent 
For a ribbon graph $G$, we denote its set of faces and the number of connected components by  $F(G)$ and $c(G)$, respectively. A ribbon graph is %considered 
{\em empty} if its edge set is empty. The {\em Euler characteristic} of $G$ is given by $\chi(G)=|V(G)|-|E(G)|+|F(G)|$, and its {\em Euler genus} is $\varepsilon(G)=2c(G)-\chi(G)$. %The orientable genus $\gamma(G)$ equals $\varepsilon(G)/2$ if $G$ is orientable and $\varepsilon(G)$ otherwise.  %\textcolor{red}{A ribbon graph with exactly one face is called a {\em quasi-tree}, while one with exactly two faces is a {\em nearly-quasi-tree}.} 
%A ribbon graph with exactly one face is called a \emph{quasi-tree} (a standard notion in the literature), while we introduce the term \emph{nearly-quasi-tree} for a ribbon graph with exactly two faces.
%A {\em bouquet} is a ribbon graph with a single vertex. 
%For a bouquet, % $B_n$, 
A loop of a bouquet is {\em twisted} (resp.{\em untwisted}) if it is homeomorphic to a M\"obius band (resp. an annulus).~The \emph{signed rotation system} of a ribbon graph consists of a cyclic ordering of the half-edges at each vertex-disc, together with a sign + or - assigned to each half-edge. In this system, the two half-edges of an untwisted loop carry the same sign (which may be omitted), whereas those of a twisted loop carry opposite signs.~%The {\em signed rotation system} of a ribbon graph consists of a cyclic ordering of the half-edges at each vertex-disc, with each half-edge assigned a sign, where the two half-edges of an untwisted loop have the same sign (the sign can be omitted), while those of a twisted loop have different signs. 
Two edges in a bouquet are {\em interlaced} if their half-edges alternate in the signed rotation.%~A loop is {\em trivial} if it is not interlaced with any other loops.
%The orientable and nonorientable bouquets with Euler genus $2$ are called {\em toroidal bouquets} and {\em Klein bottle bouquets}, respectively.
%\textcolor{blue}{The orientable bouquets with Euler genus $0$ and $2$ is called {\em plane bouquets} and {\em toroidal bouquets}, respectively. The nonorientable bouquets with Euler genus %$1$ and $2$ are called %{\em projective planar bouquets} and {\em Klein bottle bouquets}, respectively.}

A \emph{signed graph} is a graph whose vertices are labeled with the sign $+$ or $-$. A \emph{simple signed graph} is a signed graph whose underlying graph (ignoring signs of vertices) is simple. The \textit{intersection graph} $I(B)$ of a bouquet $B$ has the loops of $B$ as its vertex set, and an edge exists between two vertices if the corresponding loops are interlaced. The \emph{signed intersection graph} $SI(B)$ is a variant of $I(B)$ where each vertex is assigned a sign corresponding to its loop type $+$ for untwisted, $-$ for twisted. %A graph is a \emph{circle graph} if it is the intersection graph of some bouquets. 
%The partial Petrial polynomial of a circle graph $G$, denoted by $P_G^{\times}(z)$, is defined as $P_G^{\times}(z):={\partial}{\varepsilon^{\times}_{B}}(z)$, where $B$ is a bouquert such that $G=I(B)$. 
The \emph{adjacency matrix} $\text{adj}(SI(B))$ of the signed intersection graph of the bouquet $B$ is obtained from the adjacency matrix $\text{adj}(I(B))$ of the intersection graph $I(B)$ by assigning $1$ to the diagonal entries corresponding to vertices of $SI(B)$ with sign $-$.\\
\indent A function $f$ is said to satisfy the \emph{four-term relation} (\cite{circle_graph}, \cite{deffourterm1}, \cite{def2fourterm1}) if the alternating sum of the values of $f$ is zero on the following quadruples of diagrams as shown in Fig. \ref{Fig_four-term}.\\
\indent Let us associate each pair of (distinct) vertices $a,b\in V(G)$ of a simple graph $G$ with two other graphs $G^{\prime}_{a,b}$ and $\widetilde{G}_{a,b}$  \cite{problemfourtermrelation}. The graph $G^{\prime}_{a,b}$ is obtained from $G$ by erasing the edge $ab\in E(G)$ in the
case this edge exists, and by adding the edge otherwise. The graph $\widetilde{G}_{a,b}$ is obtained from $G$ in the following way. For any vertex $c\in V(G)\backslash\{a,b\}$, we change the adjacency between $a$ and $c$ if $c$ is adjacent to $b$, and all other edges do not change.

\begin{figure}[htbp]
	\centering
	\includegraphics[%height=6.6cm,
	width=0.6\textwidth]{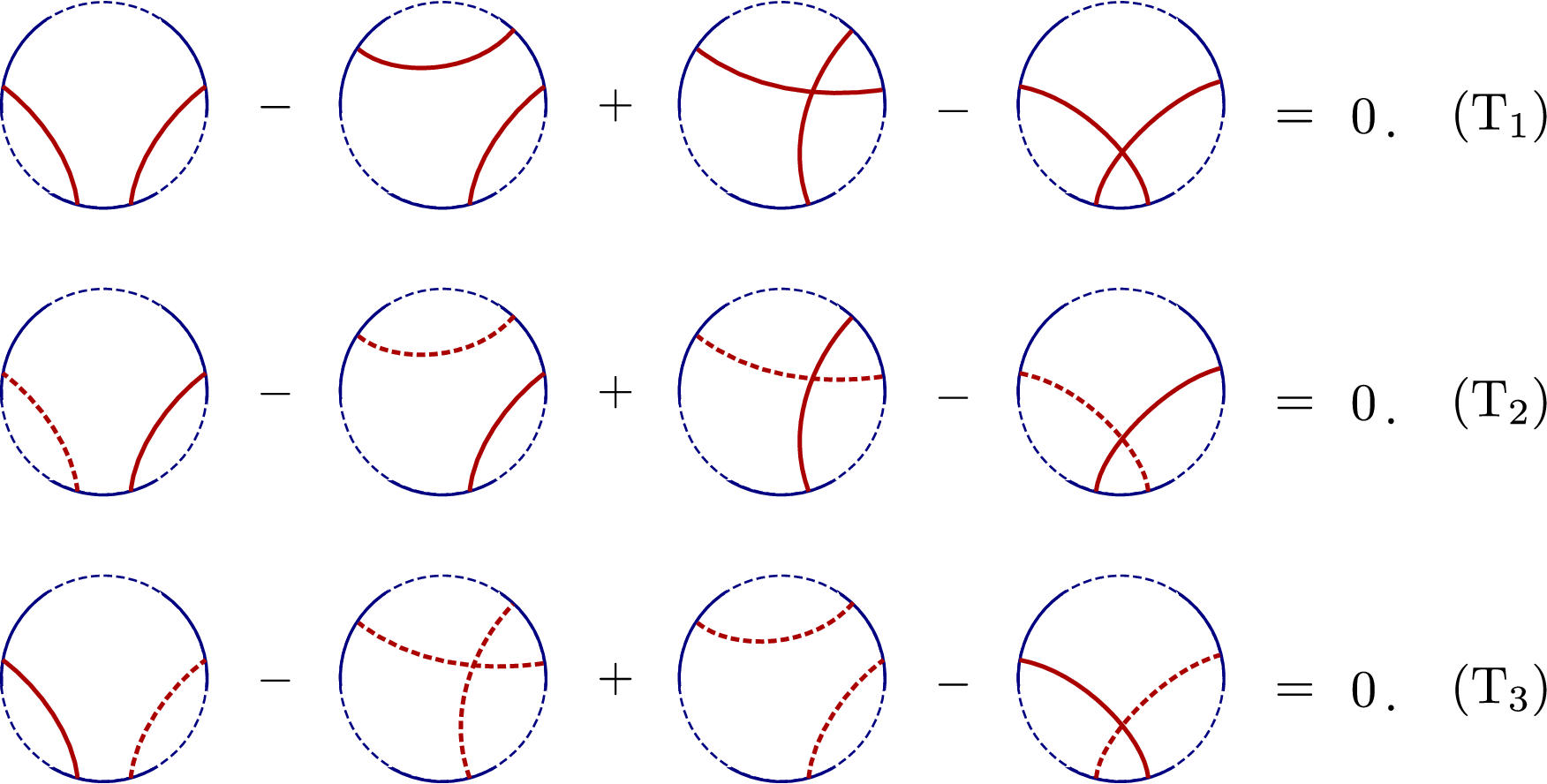}
	\caption{Four-term relation for framed chord diagrams.}	\label{Fig_four-term}
\end{figure}

%\begin{defi}\emph{\cite{problemfourtermrelation}}\label{4-invariant}
A graph invariant $g$ is called $4$-\emph{invariant} \cite{problemfourtermrelation} if it satisfies the four-term relation
\[g(S)-g(\widetilde{S}_{a,b})+g(\widetilde{S}^{\prime}_{a,b})-g(S^{\prime}_{a,b})=0\]
for each graph $S$ and for any pair $a,b$ of its vertices.

%For a polynomial $g(x)=\sum_{i=0}^{n}a_ix^{i}$, the {\em support} of the polynomial $g(x)$ is the set $supp(g)=\{i|a_i\neq 0\}$ of indices of the non-zero coefficients. Let $s=\min {supp(g)}$, $t=\max {supp(g)}$. % and $H=\{j|s\leq j\leq t,j\in \mathbb{Z}\}$. The polynomial $g(x)$ is called interpolating if $supp(g)=\{j|s\leq j\leq t,j\in \mathbb{Z}\}$.

\subsection{Key lemmas}

In this subsection, we provide some key lemmas for our arguments.

We now recall the well-known formula for calculating the Euler genus of a ribbon graph.

\begin{lem}\emph{(\cite{Yanqi_Intersection})}\label{varepsilon_ribbon}
Let $G$ be a ribbon graph. We have $\varepsilon(G)=2c(G)-\chi(G).$%=2c(G)-|V(G)|+|E(G)|-|F(G)|$.
\end{lem}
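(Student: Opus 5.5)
This statement is simply the definition of Euler genus rewritten, so the plan is to unwind the definitions rather than argue anything topological. In the preliminaries, just before the Key lemmas, the paper sets $\chi(G)=|V(G)|-|E(G)|+|F(G)|$ and then \emph{defines} $\varepsilon(G)=2c(G)-\chi(G)$. If we keep that definition, Lemma~\ref{varepsilon_ribbon} holds by definition and the proof is one line: substitute $\chi(G)$ to get $\varepsilon(G)=2c(G)-|V(G)|+|E(G)|-|F(G)|$.

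If instead the Euler genus is taken from the surface, a short classification argument is needed. Here $\varepsilon$ of a connected closed surface means $2-\chi$ of that surface: $2g$ for an orientable surface of genus $g$, and $k$ for a connected sum of $k$ projective planes. The plan has three steps.

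\begin{enumerate}
\item Treat a connected ribbon graph $G_i$ first. Glue a disc to each boundary component to obtain a closed surface $\Sigma_i$. By construction $G_i$ is cellularly embedded in $\Sigma_i$, with its faces corresponding to these discs.
\item Build a CW decomposition of $\Sigma_i$ whose $0$-cells are the vertex-discs, whose $1$-cells are the edge-ribbons, and whose $2$-cells are the faces. This gives $\chi(\Sigma_i)=|V(G_i)|-|E(G_i)|+|F(G_i)|=\chi(G_i)$, and hence $\varepsilon(G_i)=2-\chi(G_i)$.
\item For general $G$, sum over the $c(G)$ components $G_1,\dots,G_{c(G)}$. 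Both $\chi$ and $\varepsilon$ are additive over components, so
\[
\varepsilon(G)=\sum_{i=1}^{c(G)}\bigl(2-\chi(G_i)\bigr)=2c(G)-\chi(G).
\]
\end{enumerate}

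The only point that needs any care is step 2: checking that the cell decomposition is genuinely cellular, that is, that each face is an open disc. This holds because the faces are, by definition, the discs glued along the boundary components of $G$. Given the paper's own definition, even this check is unnecessary, and the lemma is immediate.
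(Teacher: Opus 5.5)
Your proposal is correct and takes the same approach as the paper. The paper gives no proof of this lemma (it only cites Yan--Jin), and its preliminaries define $\varepsilon(G)=2c(G)-\chi(G)$ with $\chi(G)=|V(G)|-|E(G)|+|F(G)|$, so the lemma holds by definition, exactly as you observe; your capping-off argument for the surface-based definition is also sound but not needed here (strictly, the $0$- and $1$-cells are points in the vertex-discs and core arcs of the edge-ribbons, not the discs and ribbons themselves).
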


For the special case of a bouquet, this formula simplifies considerably.

\begin{lem}\emph{(\cite{Yanqi_Intersection})}\label{varepsilon}
If $B$ is a bouquet, then $\varepsilon(B)=1+|E(B )|-|F(B)|$.
\end{lem}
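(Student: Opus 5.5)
The statement immediately above is Lemma~\ref{varepsilon}: for a bouquet $B$, $\varepsilon(B)=1+|E(B)|-|F(B)|$. The plan is to specialize the general formula of Lemma~\ref{varepsilon_ribbon} to the one-vertex case by evaluating each quantity directly.

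First I will check that $c(B)=1$. A bouquet has a single vertex-disc, and every edge-ribbon is attached to it at both ends. Hence the underlying surface is connected, and $c(B)=1$.

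Next I will compute the Euler characteristic. By definition $\chi(B)=|V(B)|-|E(B)|+|F(B)|$, and $|V(B)|=1$, so
\[
\chi(B)=1-|E(B)|+|F(B)|.
\]

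Finally I will substitute into Lemma~\ref{varepsilon_ribbon}:
\[
\varepsilon(B)=2c(B)-\chi(B)=2-\bigl(1-|E(B)|+|F(B)|\bigr)=1+|E(B)|-|F(B)|.
\]

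No step here is a real obstacle. The only point needing care is connectivity: the paper's standing convention that ribbon graphs are connected, together with the single vertex, gives $c(B)=1$. This remains true when $E(B)=\emptyset$. In that case $B$ is a disc, so $|F(B)|=1$ and the formula gives $\varepsilon(B)=0$, which is correct.
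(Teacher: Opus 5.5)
Your proof is correct and takes the paper's route. The paper cites this lemma from Yan and Jin without proof and presents it as the one-vertex specialization of Lemma~\ref{varepsilon_ribbon}, which is exactly your substitution of $c(B)=1$ and $|V(B)|=1$ into $\varepsilon(B)=2c(B)-\chi(B)$.
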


Lemma \ref{slide} shows that the sliding operation does not change the Euler genus of the surface.

\begin{lem}\emph{\cite{fourterm_Chmutov}}\label{slide}
Sliding one edge-ribbon of a ribbon graph along another one results in a homeomorphic surface. Hence the two ribbon graphs have the same Euler genus.
\end{lem}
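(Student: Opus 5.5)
The plan is to prove the statement at the level of surfaces with boundary. The key observation is that a slide changes only \emph{where} a band is attached. The attaching arcs are moved by an isotopy inside the boundary of the rest of the surface, and such an isotopy extends to a homeomorphism.

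\textbf{Step 1: the slide as band reattachment.} Let $e,f\in E(G)$. Suppose one end of $e$ is attached to the vertex-disc $v$, and $f$ joins $v$ to a vertex-disc $w$ (possibly $w=v$). Let $\Sigma$ be the surface underlying $G$. Let $H$ be $\Sigma$ with the interior of the ribbon $e$ and its two attaching segments removed, so that $\Sigma = H\cup e$. Here $e$ is a band $[0,1]\times[0,1]$ glued to $\partial H$ along two disjoint arcs $\alpha_0,\alpha_1\subset\partial H$. Sliding $e$ along $f$ means the following:
\begin{itemize}
\item the arc $\alpha_0$, which lies on $\partial v$, is pushed along one side of the ribbon $f$;
\item it continues along the boundary of $f$ until it lies on $\partial w$;
\item it is then reattached there, while $\alpha_1$ stays fixed.
\end{itemize}
The path followed by $\alpha_0$ runs along a boundary arc of $f$, which meets no other ribbon ends. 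Hence it stays inside one boundary component of $H$ and is disjoint from $\alpha_1$, after a small perturbation if $\alpha_1$ lies near $f$. So the slide is described by an isotopy $\phi_t$ of $\partial H$, fixing a neighbourhood of $\alpha_1$, with $\phi_1(\alpha_0)=\alpha_0'$. The twisting of $e$ is preserved, because the gluing maps are just composed with $\phi_t$.

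\textbf{Step 2: extending the isotopy.} Using a collar $\partial H\times[0,1]\subset H$, extend $\phi_1$ to a homeomorphism $\Phi\colon H\to H$. Explicitly, $\Phi$ applies $\phi_{1-s}$ on the level $\partial H\times\{s\}$ and is the identity away from the collar. Then $\Phi$ together with the identity on the band $e$ glues to a homeomorphism
\[
\Sigma=H\cup_{\alpha_0,\alpha_1}e\;\longrightarrow\;H\cup_{\alpha_0',\alpha_1}e=\Sigma',
\]
where $\Sigma'$ is the surface of the slid ribbon graph. This is the main step. The point needing care is that the isotopy must keep the moving arc disjoint from $\alpha_1$. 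It must also respect the identification of the band ends, so that the orientation or twist of $e$ is not altered.

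\textbf{Step 3: the Euler genus.} Homeomorphic surfaces with boundary have the same number of connected components and the same number of boundary components. Sliding does not change $|V(G)|$ or $|E(G)|$. The faces of $G$ are exactly the boundary components of $\Sigma$, so $|F|$ is also preserved. Therefore $\chi$ and $c$ are unchanged, and by Lemma~\ref{varepsilon_ribbon} the Euler genus $\varepsilon=2c-\chi$ is unchanged. Equivalently, capping the boundary components with discs gives homeomorphic closed surfaces, which have the same Euler genus.
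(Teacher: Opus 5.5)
The paper gives no proof of this lemma. It simply imports it from Chmutov, where it rests on the picture of one ribbon end being pushed along the side of another. Your argument is correct and self-contained, and it makes that picture rigorous. You view the slide as an isotopy of the attaching arc $\alpha_0$ inside $\partial H$, extend it over a collar to a self-homeomorphism $\Phi$ of $H$, and glue with the identity on the band; this is the standard ``handle slide'' argument. It actually shows more: any isotopy of attaching arcs within $\partial H$ yields a homeomorphic surface. Step 3 is also fine, since $|V|$ and $|E|$ are unchanged while $|F|$ and $c$ are topological invariants of the surface with boundary.

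Three small points of precision:
\begin{itemize}
\item $H$ should be $\overline{\Sigma\setminus e}$: you remove the band but \emph{keep} the attaching segments. Otherwise $\alpha_0,\alpha_1$ are not arcs of $\partial H$ and $H$ is not compact.
\item The path of $\alpha_0$ begins with a stretch of $\partial v$ between $\alpha_0$ and the end of $f$. That stretch lies in $\partial H$ only because the two ends are adjacent. Adjacency is the hypothesis in the paper's definition of sliding, and you should state it explicitly.
\item ``The twisting of $e$ is preserved'' is true of the band itself. However, the framing of $e$ read off from the signed rotation of the slid graph does change when $f$ is twisted; compare the sign change of $a$ in the paper's definition of $\widetilde{S}_{a,b}$. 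This is harmless, since the slid graph is by definition the one glued via $\phi_1$, but the sentence as written could mislead.
\end{itemize}
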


The intersection graph plays a crucial role in the classification of bouquets.

\begin{lem}\label{varepsilion_SIB}\emph{\cite{Yanqi_Intersection}}
	For two bouquets $B_1$ and $B_2$, if $SI(B_1)=SI(B_2)$, % have the same signed intersection graph, 
	then $\varepsilon(B_1)=\varepsilon(B_2)$.
\end{lem}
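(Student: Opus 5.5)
The plan is to prove Lemma \ref{varepsilion_SIB} by computing $\varepsilon(B)$ purely from $SI(B)$. By Lemma \ref{varepsilon}, for a bouquet $\varepsilon(B)=1+|E(B)|-|F(B)|$. Since $SI(B_1)=SI(B_2)$ forces $|E(B_1)|=|E(B_2)|$, it suffices to show that $|F(B)|$ depends only on $SI(B)$.

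\textbf{Step 1: normal form.} Record $B$ by its signed rotation system at the single vertex: a cyclic word in which each loop $e$ appears twice, with signs that agree exactly when $e$ is untwisted. The sign data of each loop is then determined by the sign of the corresponding vertex of $SI(B)$, up to the usual vertex-flip equivalence. The remaining freedom between two bouquets with equal $SI$ is the cyclic arrangement of chord endpoints, subject to the fixed interlacement pattern.

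\textbf{Step 2: a face-tracing invariant.} Use the standard description of boundary components of a one-vertex ribbon graph. Number the $2m$ corners of the vertex disc. Traversing a loop sends one corner to another, and the twist of the loop decides whether the traversal direction reverses. The faces are the orbits of this induced permutation. Following the approach of \cite{Yanqi_Intersection}, and the linear-algebra viewpoint of Beck and Mellor that the paper builds on, I would show that
\[
|F(B)| = 1 + \operatorname{corank}\bigl(\operatorname{adj}(SI(B))\bigr) \quad \text{over } \mathbb{GF}(2),
\]
that is, $\varepsilon(B)=\operatorname{rank}(\operatorname{adj}(SI(B)))$. The right-hand side visibly depends only on $SI(B)$.

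\textbf{Step 3: induction on the number of loops.} Prove the formula of Step 2 by induction on $|E(B)|$, using the moves of Lemma \ref{slide}. Sliding one loop over an interlaced loop preserves the surface. At the level of the matrix over $\mathbb{GF}(2)$, it acts as a congruence $M\mapsto PMP^{T}$ with $P$ elementary: one row and the matching column are added to another. A congruence preserves rank.
\begin{itemize}
\item If some loop $e$ is interlaced with nothing and untwisted, deleting it removes one edge and one face, while the matrix loses a zero row and column. Both sides therefore drop consistently.
\item If $e$ is twisted and isolated, deleting it removes one edge and leaves the face count unchanged. The rank drops by exactly one, so both sides again drop consistently.
\item Otherwise, use slides to reduce to a join of a small piece (a twisted loop, or an interlaced pair forming a torus) with a smaller bouquet. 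Then apply additivity of genus and of rank over block-diagonal matrices.
\end{itemize}

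\textbf{Main obstacle.} The hard part is checking that every slide corresponds exactly to the stated elementary congruence on $\operatorname{adj}(SI(B))$, including how the diagonal entries (the twists) update. When one twisted loop is slid over another, the twist of the moved loop changes by the parity of the interlacement together with the twist of the other loop. This has to be confirmed by a careful case check on the four local endpoint orders. I would first carry out this check for the untwisted case, which is Beck's orientable setting, and then handle the twisted diagonal correction as Mellor does. Once the congruence holds, rank invariance completes the induction, and the lemma follows.
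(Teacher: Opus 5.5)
Your argument is correct, but it takes a different and more self-contained route than the paper. The paper does not prove this lemma; it quotes it from \cite{Yanqi_Intersection}. Within the paper it is also an immediate corollary of Lemma \ref{rank=genusofbouquet} (Mellor's corank formula, Lemma \ref{rank=1+e-f}, plus Lemma \ref{varepsilon}), which does not depend on this lemma: since $\text{adj}(SI(B))$ is determined by $SI(B)$, equal signed intersection graphs give equal ranks and hence equal Euler genus. Your Step 2 is exactly that one-line reduction. Your Step 3 then re-proves the Beck--Mellor formula itself by induction on $|E(B)|$ using handle slides. This gains independence from the cited theorem, at the cost of a slide-versus-congruence check and a normal-form reduction. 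Step 1 is never used and can be dropped.

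Two things need to be right in Step 3. First, the update rule. Slide the end of $a$ that is adjacent to $b$ along $b$. Then, for each $c\neq a,b$, the interlacement of $a$ with $c$ changes by the interlacement of $b$ with $c$. The interlacement of $a$ and $b$ flips exactly when $b$ is twisted. The twist of $a$ also flips exactly when $b$ is twisted, \emph{independently} of whether $a$ and $b$ interlace. This is precisely $M\mapsto PMP^{T}$ with $P=I+E_{ab}$, since the new $(a,a)$ entry is $M_{aa}+M_{ab}+M_{ba}+M_{bb}=M_{aa}+M_{bb}$ over $\mathbb{GF}(2)$. Your last paragraph seems to suggest that the twist of $a$ also picks up the interlacement parity. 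That rule would not be a congruence, and it would turn a handle loop of a punctured torus into a twisted loop on an orientable surface.

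Second, slides require adjacent ends, not interlaced loops, and the reduction needs slides over non-interlaced loops as well. For a twisted $e$ with word $e\,X\,\bar e\,Y$, repeatedly slide the end just after the first end of $e$ over $e$. Each such end lands just after the second end of $e$ with its sign flipped. At the end, $X$ is reversed and flipped and $e$ is isolated, so your second case applies. If no loop is twisted but $a,b$ interlace, write the word as $a\,X\,b\,Y\,a\,Z\,b\,W$ and proceed as follows:
\begin{itemize}
\item slide $X$ over $a$;
\item slide the block following the first $b$ over $b$;
\item slide the block following the second $a$ over $a$.
\end{itemize}
This yields $a\,b\,a\,b\,W'$, a punctured torus joined to a smaller bouquet, where Euler genus and rank both add.
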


Lemma \ref{Petrial_orientable} demonstrates a crucial connection between the partial Petrial polynomial and the intersection graph of a bouquet.

\begin{lem}\emph{\cite{Matroids_Yan}}\label{Petrial_orientable}
If bouquets $B_1,B_2$ have the same intersection graph, then $^{\partial}{\varepsilon^{\times}_{B_1}}(z)=^{\partial}{\varepsilon^{\times}_{B_2}}(z)$.
\end{lem}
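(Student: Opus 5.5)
The statement is the cited fact that two bouquets $B_1,B_2$ with the same intersection graph $I(B_1)=I(B_2)$ have the same partial Petrial polynomial. The plan is to match the partial Petrials of $B_1$ with those of $B_2$ one subset at a time. The key point is that a partial Petrial $B^{\times|A}$ of a bouquet keeps the interlacement of the loops, since twisting a loop does not move the positions of its ends in the cyclic rotation. The only thing that changes is the twistedness of the loops in $A$: each such loop switches between untwisted and twisted. So $I(B^{\times|A})=I(B)$, and $SI(B^{\times|A})$ is obtained from $SI(B)$ by flipping the signs of the vertices in $A$.

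We identify the loops of $B_1$ and $B_2$ through the given isomorphism of intersection graphs, so $E(B_1)=E(B_2)=E$. Let $\tau_1,\tau_2\subseteq E$ be the sets of twisted loops in $B_1$ and $B_2$, and set $D=\tau_1\triangle\tau_2$. The map $A\mapsto A\triangle D$ is a bijection on $2^E$. For each $A$, the bouquets $B_1^{\times|A}$ and $B_2^{\times|(A\triangle D)}$ have the same intersection graph. Their sets of twisted loops are
\[
\tau_1\triangle A \quad\text{and}\quad \tau_2\triangle A\triangle D=\tau_1\triangle A,
\]
which agree. Hence $SI(B_1^{\times|A})=SI(B_2^{\times|(A\triangle D)})$.

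Lemma \ref{varepsilion_SIB} now gives $\varepsilon(B_1^{\times|A})=\varepsilon(B_2^{\times|(A\triangle D)})$. Summing $z^{\varepsilon}$ over all $A\subseteq E$ and reindexing the second sum by the bijection yields ${}^{\partial}\varepsilon^{\times}_{B_1}(z)={}^{\partial}\varepsilon^{\times}_{B_2}(z)$.

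The main obstacle is to make the first step rigorous: the Petrial of a single loop in a bouquet toggles its twist while leaving the cyclic order of the half-edges, and hence interlacement, unchanged. In the signed rotation system this is immediate. Applying the Petrial to an edge flips the sign of one of its half-edges. For a loop, this changes whether the two signs agree, so it toggles twistedness, and the cyclic order is untouched.

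A second subtlety is that interlacement must be read off from the cyclic order alone, regardless of the signs of the half-edges. This is the convention under which Lemma \ref{varepsilion_SIB} holds. Alternatively, one can normalise all vertex signs to $+$ by local vertex flips, which do not change the ribbon graph. With either reading the argument goes through.
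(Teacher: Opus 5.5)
Your proof is correct. The paper does not prove this lemma; it only cites it from \cite{Matroids_Yan}. However, its proof of the signed analogue for the modified polynomial (the lemma preceding Corollary \ref{signedintersectiondeterminedMpPp}) uses exactly your bijection $g(A)=\phi(A)\Delta(\phi(T_1)\Delta T_2)$, which is your $A\mapsto A\triangle D$ once loops are identified via $\phi$, together with $SI(B_1^{\times|A})=SI(B_2^{\times|g(A)})$ and Lemma \ref{varepsilion_SIB}, so your route is essentially the same, with your sign-flip check making explicit what the paper attributes to ``the definition of partial Petrial graph.''
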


Lemma \ref{G,GA} shows that studying the partial Petrial polynomial of a ribbon graph is equivalent to studying the polynomial of any of its partial Petrial graphs.

\begin{lem}\emph{\cite{Petrial_Yan}}\label{G,GA}
Let $G$ be a ribbon graph and $A\subseteq E(G)$. Then  $^{\partial}{\varepsilon^{\times}_{G}}(z)=^{\partial}{\varepsilon^{\times}_{G^{\times|A}}}(z)$.
\end{lem}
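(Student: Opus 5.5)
The "statement immediately above" is Lemma \ref{G,GA}, which says that ${}^{\partial}{\varepsilon^{\times}_{G}}(z)={}^{\partial}{\varepsilon^{\times}_{G^{\times|A}}}(z)$ for every $A\subseteq E(G)$. The plan is a reindexing argument: the partial Petrials of $G^{\times|A}$ are exactly the partial Petrials of $G$, taken with the same multiplicities.

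First I record two basic properties of the partial Petrial operation.
\begin{itemize}
\item \emph{Involution.} Twisting one edge-ribbon twice returns it to its original attachment up to homeomorphism, so $(G^{\times|\{e\}})^{\times|\{e\}}=G$.
\item \emph{Commutativity.} Twists applied to distinct edges act on disjoint edge-ribbons, so they commute.
\end{itemize}
Together these give, for every $B\subseteq E(G)=E(G^{\times|A})$,
\[(G^{\times|A})^{\times|B}=G^{\times|(A\triangle B)},\]
where $\triangle$ is symmetric difference. Here edges of $A\cap B$ are twisted twice and are unchanged, while edges of $A\triangle B$ are twisted exactly once.

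Next I use that the map $B\mapsto A\triangle B$ is a bijection on the power set of $E(G)$; it is its own inverse. Substituting into the definition of the polynomial gives
\[
{}^{\partial}{\varepsilon^{\times}_{G^{\times|A}}}(z)=\sum_{B\subseteq E(G)}z^{\varepsilon((G^{\times|A})^{\times|B})}=\sum_{B\subseteq E(G)}z^{\varepsilon(G^{\times|(A\triangle B)})}=\sum_{C\subseteq E(G)}z^{\varepsilon(G^{\times|C})},
\]
and the last sum is ${}^{\partial}{\varepsilon^{\times}_{G}}(z)$. Euler genus is a homeomorphism invariant of ribbon graphs, so equal ribbon graphs up to equivalence contribute equal exponents, and no further justification is needed at that step.

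The only point requiring care is the involution property. Applying a half-twist twice to a ribbon yields a full twist. Up to the standard equivalence of ribbon graphs (homeomorphism preserving the vertex/edge structure), a full twist is trivial: in the signed rotation system, flipping both signs of the detached end twice restores them. I would argue this directly in signed rotation terms, and that settles the identity.
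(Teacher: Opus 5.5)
Your proof is correct: the identity $(G^{\times|A})^{\times|B}=G^{\times|(A\Delta B)}$, together with the involutive bijection $B\mapsto A\Delta B$ on subsets of $E(G)$, is all that is needed, and your justification of the involution step is sound (twisting the same edge twice flips that half-edge's sign back). The paper does not prove this lemma but imports it from \cite{Petrial_Yan}; your symmetric-difference reindexing is the standard argument, and the paper uses the same device elsewhere, e.g.\ $G^{\times|(X\cup Y)}=(G_X)^{\times|Y}$ in the proof of Theorem \ref{rankofpolyPetrial_ribbon} and the map $g(A)=\phi(A)\Delta(\phi(T_1)\Delta T_2)$.
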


%Lemma \ref{additionofjoin} shows that how the Euler genus behaves with respect to the join operation.
%\begin{lem}\emph{\cite{2001_Graphs_on_surfaces}}\label{additionofjoin}
%Let $G_1$ and $G_2$ be disjoint ribbon graphs, then
%$\varepsilon(G_1\vee G_2)=\varepsilon(G_1)+\varepsilon(G_2)$.
%\end{lem}

%\vspace{-0.2em}
%The concept of a prime bouquet is characterized by the connectivity of its intersection graph.

%\vspace{-0.2em}
%\begin{lem}\emph{(\cite{Yanqi_Intersection})}\label{prime-conected}
%A bouquet $B_n$ is prime if and only if its  intersection graph is connected.	
%\end{lem}

%A key property of the partial Petrial polynomial is that it is interpolating, meaning its coefficients are non-zero for all integers between the minimum and maximum degrees.

%\begin{lem}\emph{(\cite{2021_II})}\label{interpolation of Petrial} 
%For any ribbon graph $G$, the partial Petrial polynomial $^{\partial}{\varepsilon^{\times}_{G}}(z)$ is interpolating.
%\end{lem}

%We also rely on the result for the highest degree of the partial Petrial polynomial.% for a connected graph.

%\begin{lem}\emph{(\cite{2021_II})}\label{highest-term of Petrial} 
%If $G$ is connected, then the highest degree of $^{\partial}{\varepsilon^{\times}_{G}}(z)$ is $|E(G)|-|V(G)| + 1$.
%\end{lem}

In this paper, the rank of any matrix $A$ over $\mathbb{GF}(2)$ is denoted by $\text{rank}(A)$.
%The rank of a matrix $A$ over $\mathbb{GF}(2)$ is denoted $\text{rank}(A)_{\mathbb{Z}_{2}}$. %Finally, we use a known result that relates the rank of this matrix to the number of edges and faces of the bouquet.

\begin{lem}\emph{\cite{weight}}\label{rank=1+e-f}
For a bouquet $B$, then $\text{rank}(\text{adj}(SI(B)))%_{\mathbb{Z}_2}
=|E(B)|-|F(B)|+1$.
\end{lem}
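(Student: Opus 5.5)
The statement to prove is the last one displayed, Lemma \ref{rank=1+e-f}: for a bouquet $B$, $\text{rank}(\text{adj}(SI(B)))=|E(B)|-|F(B)|+1$. Equivalently, by Lemma \ref{varepsilon}, the rank equals $\varepsilon(B)$. I will argue by induction on $|E(B)|$, keeping $|E(B)|-|F(B)|+1$ and the $\mathbb{GF}(2)$-rank in step under operations that simplify the chord diagram.

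\textbf{Base cases.} If $B$ has no edges, both sides are $0$, since there is one face. If every loop of $B$ is non-interlaced with all others, then $B$ is a join of single loops. An untwisted isolated loop adds one edge and one face, and contributes a zero diagonal entry. A twisted isolated loop adds one edge and no face, and contributes a $1$ on the diagonal. Hence the rank counts the twisted loops, and so does $|E|-|F|+1$.

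\textbf{Inductive step via sliding.} By Lemma \ref{slide}, sliding one end of a loop $a$ along a loop $b$ preserves the surface. It therefore preserves $|E|$ and $|F|$, so the right-hand side is unchanged. The key computation is to show that sliding acts on $M=\text{adj}(SI(B))$ by a symmetric elementary operation over $\mathbb{GF}(2)$: add row $b$ to row $a$ and column $b$ to column $a$. This changes the interlacing of $a$ with each $c\neq a,b$ exactly when $c$ is interlaced with $b$, which matches the $\widetilde{S}_{a,b}$ operation. It also changes the twist status of $a$ by $M_{bb}+2M_{ab}\equiv M_{bb}$ on the diagonal.

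I expect verifying this correspondence carefully to be the main obstacle. It requires handling all cases of twisted or untwisted $a,b$ and the relative positions of the endpoints, and confirming that the diagonal entry of $a$ flips exactly when $b$ is twisted. The conjugation $M\mapsto EME^{T}$ then shows the rank is preserved.

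\textbf{Reduction.} Using such slides (a Gaussian-elimination-like process), I will reduce $B$ to a join of small pieces: isolated loops, pairs of interlaced untwisted loops forming a torus summand, and twisted loops. The rank is additive over joins because the matrix becomes block diagonal. Likewise $|E|-|F|+1$ is additive over joins, since $\varepsilon$ is additive. For an interlaced untwisted pair there are two edges and one face, so the right-hand side is $2$, while the matrix $\begin{pmatrix}0&1\\1&0\end{pmatrix}$ has rank $2$. Combining this with the base cases finishes the proof.

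Alternatively, the reduction can follow the classification of bouquets by signed intersection graph, Lemma \ref{varepsilion_SIB}.
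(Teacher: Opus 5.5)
The paper gives no proof of this lemma; it imports it from Mellor \cite{weight}, with the orientable case going back to Beck \cite{Beck}. Your handle-slide argument is therefore a genuinely different, self-contained route, and it is sound in outline:
\begin{itemize}
\item a slide preserves $|E(B)|$ and $|F(B)|$ by Lemma \ref{slide};
\item a slide acts on $M=\text{adj}(SI(B))$ by the congruence $M\mapsto EME^{T}$ with $E=I+e_ae_b^{T}$;
\item both sides are additive under joins;
\item both sides agree on a single untwisted loop ($0$), a single twisted loop ($1$) and an interlaced untwisted pair ($2$).
\end{itemize}
Compared with the citation, your route proves the identity from Lemma \ref{slide} alone and yields a normal form for bouquets as a by-product. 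Conceptually, both sides compute the rank of the mod $2$ intersection form on $H_1(B;\mathbb{Z}_2)$ in the basis of loops. Its radical is spanned by the $|F(B)|$ boundary classes, which satisfy exactly one relation, and your slides are just changes of basis of this form.

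Two points still have to be supplied.

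\emph{The $(a,b)$ entry.} The congruence also replaces the $(a,b)$ entry by $M_{ab}+M_{bb}$. So your local check must show that sliding over a twisted $b$ toggles whether $a$ and $b$ interlace. It does. An end of $a$ sitting just before $b_1$ reappears just after $b_2$ if $b$ is untwisted. If $b$ is twisted, it reappears just before $b_2$, with its sign flipped. Note that the paper's verbal description of $\widetilde{S}_{a,b}$ only toggles adjacencies to $N(b)$, so it omits this.

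\emph{The reduction.} This is where the proof actually lives, and ``Gaussian elimination'' cannot be run blindly. The congruence $M\mapsto EME^{T}$ is realized by a legal slide only when an end of $a$ is adjacent to an end of $b$. You need an explicit procedure, for instance:
\begin{itemize}
\item If some loop has consecutive ends, split it off as a join factor.
\item Otherwise, if some loop $a$ is twisted, repeatedly slide the end immediately after $a_1$ over $a$. It reappears immediately after $a_2$, so the arc from $a_1$ to $a_2$ shrinks until $a$ has consecutive ends.
\item Otherwise all loops are untwisted, and some $a,b$ interlace, since a diagram with no interlacing always has a loop with consecutive ends. Write the rotation as $a_1Pb_1Qa_2Rb_2S$. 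One end at a time, slide the first end of $P$ over $a$; it lands at the end of $Q$. Then slide the first end of $Q$ over $b$, landing at the end of $R$. Then slide the first end of $R$ over $a$, landing at the end of $S$. You reach $a_1b_1a_2b_2S'$, i.e.\ a torus summand joined to a smaller bouquet.
\end{itemize}
Each case splits off a join factor, and this is what makes your induction on $|E(B)|$ go through; the slides themselves do not reduce $|E(B)|$. Your suggested alternative via Lemma \ref{varepsilion_SIB} only says that the signed intersection graph determines $\varepsilon$. It provides no such reduction and should be dropped.
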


By Lemmas \ref{varepsilon} and \ref{rank=1+e-f}, we have Lemma \ref{rank=genusofbouquet}, which is a key lemma for our main results.

\begin{lem}\label{rank=genusofbouquet}
For a bouquet $B$, then $\varepsilon(B)=\text{rank}(\text{adj}(SI(B)))%_{\mathbb{Z}_2}
$.
\end{lem}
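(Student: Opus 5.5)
The statement follows directly by combining Lemma \ref{varepsilon} with Lemma \ref{rank=1+e-f}, so the plan is simply to compare the two expressions for a bouquet $B$.

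First, by Lemma \ref{varepsilon}, since $B$ has exactly one vertex and is connected (so $c(B)=1$ and $|V(B)|=1$), we have
\[
\varepsilon(B)=2c(B)-\chi(B)=2-\bigl(1-|E(B)|+|F(B)|\bigr)=1+|E(B)|-|F(B)|.
\]
Second, Lemma \ref{rank=1+e-f} (Mellor's result, extending Beck's orientable case) gives
\[
\text{rank}(\text{adj}(SI(B)))=|E(B)|-|F(B)|+1
\]
over $\mathbb{GF}(2)$. Comparing the right-hand sides yields $\varepsilon(B)=\text{rank}(\text{adj}(SI(B)))$.

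The only point needing care is that both lemmas are applied to the same objects. The face count $|F(B)|$ must be that of the ribbon graph $B$ itself, and $\text{adj}(SI(B))$ must carry the diagonal convention fixed in the preliminaries: entry $1$ exactly at the twisted loops. Mellor's lemma is stated with this convention, so no further argument is required. There is no genuine obstacle. If anything is to be checked, it is only that the empty bouquet case (no loops) is consistent: it has one face, so both sides are $0$, with the rank of the empty matrix taken to be $0$.
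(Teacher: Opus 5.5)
Your proposal is correct and is exactly the paper's argument: the paper obtains this lemma simply by equating the bouquet genus formula $\varepsilon(B)=1+|E(B)|-|F(B)|$ (Lemma~\ref{varepsilon}) with Mellor's rank formula (Lemma~\ref{rank=1+e-f}). Your re-derivation of the genus formula from $\varepsilon=2c-\chi$ and your check of the empty bouquet are harmless additions that the paper omits.
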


Based on Lemma \ref{rank=genusofbouquet}, we gave an equivalent definition of the partial Petrial polynomial from the perspective of the matrix rank as follows.
\begin{lem}\label{Petrialofbouquet-rank}
	%For a $B$, then t
	The partial Petrial polynomial of any prime bouquet $B$ has an exact expression:  %can be expressed equivalently as follows
	\[^{\partial}{\varepsilon_{B}^{\times}}(z)=\sum_{A \subseteq E(B)} z^{\text{rank}(\text{adj}(SI(B^{\times|A})))%_{\mathbb{Z}_2}
	} =\sum_{A \subseteq \mathbb{Z}_{n}} z^{\text{rank}(\text{adj}(I(B))+\mathbb{D}_{A})}.%_{\mathbb{Z}_2}}.
	\]
\end{lem}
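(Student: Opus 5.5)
The proof has two equalities to establish, and both follow from Lemma \ref{rank=genusofbouquet} plus one observation about how the Petrial acts on the signed intersection graph.

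\textbf{First equality.} By Definition \ref{poly_Petrial}, $^{\partial}{\varepsilon_{B}^{\times}}(z)=\sum_{A\subseteq E(B)} z^{\varepsilon(B^{\times|A})}$. For each $A$, the partial Petrial $B^{\times|A}$ is again a bouquet. Lemma \ref{rank=genusofbouquet} therefore applies to it and gives $\varepsilon(B^{\times|A})=\text{rank}(\text{adj}(SI(B^{\times|A})))$. Substituting term by term yields the first expression.

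\textbf{Second equality.} Here I would describe $SI(B^{\times|A})$ in terms of $SI(B)$.

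First, the interlacing is unchanged. Applying a half-twist to a loop $e$ (detach one end, twist, reattach) does not move the positions of its two ends in the cyclic rotation at the vertex; it only flips the sign of one half-edge. So the alternation pattern of half-edges is the same, and $I(B^{\times|A})=I(B)$.

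Second, the signs change exactly on $A$. Twisting a loop $e$ turns an untwisted loop into a twisted one and vice versa, so the sign of $e$ in $SI$ is flipped precisely when $e\in A$. Hence, over $\mathbb{GF}(2)$,
\[
\text{adj}(SI(B^{\times|A}))=\text{adj}(I(B))+\mathbb{D}_{T}+\mathbb{D}_{A},
\]
where $T$ is the set of twisted loops of $B$ and we label $E(B)$ by $\mathbb{Z}_n$.

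As $A$ ranges over all subsets of $\mathbb{Z}_n$, so does the symmetric difference $A\triangle T$, and the map $A\mapsto A\triangle T$ is a bijection. Since $\mathbb{D}_T+\mathbb{D}_A=\mathbb{D}_{A\triangle T}$, reindexing the sum by $A\triangle T$ gives
\[
\sum_{A\subseteq\mathbb{Z}_n} z^{\text{rank}(\text{adj}(I(B))+\mathbb{D}_A)}.
\]
Alternatively, one can apply Lemma \ref{G,GA} with the set $T$, which reduces to the case where $B$ is orientable and $T=\emptyset$; the same computation then works directly.

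The prime hypothesis is not used in this argument.

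\textbf{Main obstacle.} The only step needing care is the topological claim that a partial Petrial of a single loop preserves interlacing and toggles twistedness. I would justify it with the signed rotation system: the Petrial on $e$ flips the sign of one half-edge of $e$, so the two half-edges of $e$ go from equal to opposite signs or back, while the cyclic order of all half-edges is untouched.
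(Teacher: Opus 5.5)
Your proposal is correct and takes essentially the paper's route: the paper justifies this lemma only by invoking Lemma \ref{rank=genusofbouquet} for each partial Petrial $B^{\times|A}$. Your identity $\text{adj}(SI(B^{\times|A}))=\text{adj}(I(B))+\mathbb{D}_{T\Delta A}$ and the reindexing $A\mapsto A\Delta T$ spell out the second equality, which the paper leaves implicit; you are also right that primeness is never used, and the paper itself later applies the lemma to the bouquets $Aux(G_X,T)$, which need not be prime.
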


It is well known that ribbon graphs can be viewed as surfaces with boundary. We review an equivalent condition for surface homeomorphism.

\begin{lem}\emph{\cite{Topology of Surfaces}} \label{thm:compact-surface-equivalence}
	Let \( S_1 \) and \( S_2 \) be compact connected surfaces with boundary. Then \( S_1 \) is topologically equivalent to \( S_2 \) if and only if they have the same number of boundary components, both are orientable or both non-orientable, and they have the same Euler characteristic.
\end{lem}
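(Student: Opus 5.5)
The plan is to reduce the statement to the classification of closed surfaces by capping off the boundary, and then to undo the capping in a controlled way. Necessity is routine. The number of boundary components is a homeomorphism invariant, because a homeomorphism carries $\partial S_1$ onto $\partial S_2$ (boundary points are characterized locally by invariance of domain). Orientability is a topological invariant. The Euler characteristic is a homotopy invariant, for instance via singular homology with $\chi=\sum(-1)^i\operatorname{rank} H_i$, and for triangulated surfaces it agrees with $V-E+F$.

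For sufficiency, let $S_1,S_2$ have $b$ boundary components each, the same orientability type, and $\chi(S_1)=\chi(S_2)$.

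\begin{enumerate}
\item Since each $S_i$ is a compact $2$-manifold with boundary, $\partial S_i$ is a compact $1$-manifold without boundary. Hence it is a disjoint union of exactly $b$ circles.
\item Glue a closed disc to each boundary circle to obtain closed connected surfaces $\widehat S_i$. Gluing a disc along a circle adds one face and no net change in vertices minus edges, so $\chi(\widehat S_i)=\chi(S_i)+b$, and therefore $\chi(\widehat S_1)=\chi(\widehat S_2)$.
\item Capping preserves orientability type. An orientation of $S_i$ extends across the discs, since each disc can be glued compatibly with the induced boundary orientation. Conversely, an orientation of $\widehat S_i$ restricts to one of $S_i$, so a Möbius band in $S_i$ persists in $\widehat S_i$.
\item By the classification of closed surfaces (closed connected surfaces are classified by orientability and Euler characteristic), there is a homeomorphism $h:\widehat S_1\to\widehat S_2$.
\end{enumerate}

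The remaining step, which I expect to be the main obstacle, is to show that $h$ can be chosen to map the $b$ capping discs $D_1,\dots,D_b\subset\widehat S_1$ onto the capping discs $D'_1,\dots,D'_b\subset\widehat S_2$. Restricting $h$ to the complements of the disc interiors then gives $S_1\cong S_2$.

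I would handle this with the disc theorem: any two orientation-compatible embeddings of a closed disc into the interior of a connected surface are ambiently isotopic (Palais). I would apply it inductively, one disc at a time. At the $k$-th step, work in the connected surface $\widehat S_2\setminus\bigcup_{j<k}D'_j$, which stays connected because removing disjoint discs from a connected surface of dimension $2$ does not disconnect it. Use an ambient isotopy supported away from the discs already matched to move $h(D_k)$ onto $D'_k$, reversing the orientation of the embedding by a reflection if necessary in the non-orientable case or by choice of parametrization. Since $\widehat S_2$ is connected, homogeneity lets us first move a point of $h(D_k)$ into $D'_k$, after which the local disc theorem applies.

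Composing $h$ with these isotopies yields a homeomorphism $\widehat S_1\to\widehat S_2$ carrying $\bigcup D_j$ onto $\bigcup D'_j$. Removing the open discs then gives the desired homeomorphism $S_1\to S_2$.
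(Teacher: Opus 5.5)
The paper gives no proof of this lemma; it simply quotes the classification of compact surfaces with boundary from Kinsey. The paper only ever uses the easy direction, namely that homeomorphic surfaces have equal Euler characteristic, in its proof that $\varepsilon(G)=\varepsilon(Aux(G,T))$. Your necessity paragraph covers that direction.

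Your sufficiency argument is the standard proof and it is correct: cap off the $b$ boundary circles, apply the classification of closed surfaces, then correct the homeomorphism so that capping discs go to capping discs. Two places deserve slightly more care.

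\begin{itemize}
\item \textbf{Smooth versus topological.} Palais's disc theorem is a smooth-category result, while $h$ is merely a homeomorphism. You should either cite the two-dimensional topological version, or carry out the whole argument smoothly. The topological version says: by the Schoenflies theorem every closed disc embedded in the interior of a surface is collared, and any two such discs in a connected surface are ambiently isotopic.
\item \textbf{Matching the discs.} The sentence ``move a point of $h(D_k)$ into $D'_k$, after which the local disc theorem applies'' is not enough as written. After that move, $h(D_k)$ and $D'_k$ still need not lie in a common chart. The usual fix has three steps. First shrink each disc, inside a slightly larger collared open disc, to a tiny disc about an interior point. Then move one centre onto the other by an isotopy supported in the connected open set $\widehat S_2\setminus\bigcup_{j<k}D'_j$. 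Only then compare the two tiny discs inside a single chart.
\end{itemize}

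You only need $h(D_k)=D'_k$ as sets, so precomposing with a reflection of $D^2$ settles the orientation issue in both the orientable and non-orientable cases, as you say.
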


%\vspace{-0.8em}
\section{An equivalent definition of partial Petrial polynomial}

In \cite{Beck} and \cite{weight}, Beck and Mellor proved, in the orientable and non-orientable cases respectively, that the Euler genus of a bouquet equals the rank of a certain matrix over $\mathbb{GF}(2)$.  In this section, we extend this result to all ribbon graphs with more than one vertex. Furthermore, we give an explicit expression of the partial Petrial polynomial. 

\subsection{A key construction and a proof of Theorem \ref{G=aux(G,T)}}

\begin{defi}\emph{\cite{2001_Graphs_on_surfaces}}
For any connected ribbon graph $G$ with at least two vertices, let $e$ be a proper edge-ribbon of $G$ connecting two distinct vertex-discs $u$ and $v$. The edge-contraction $G/e$ is defined by replacing $e,u$ and $v$ with a new vertex-disc $e\cup u\cup v$ and keeping other edge-ribbons and vertex-discs unchanged $($see Fig.  \emph{\ref{step2}} for $G/e$, where  $B^{-1}$ is obtained from the ordered set $B$ by reversing $B$ and then multiplying all elements in $B$ by $-1$$)$.
\end{defi}

\begin{defi}\label{Constructbouquet}
	For any connected ribbon graph $G$ with $n$ $(n\geq 2)$ vertices, let $T$ be any spanning tree of $G$. The auxiliary bouquet, denoted by $Aux(G,T)$, of $G$ with respect to $T$ is obtained from $G$ by contracting all edge-ribbons of $T$.
\end{defi}
\vspace{-1.2em}
\begin{Remark}\label{auxtwist}
	For a ribbon graph $G$, assume that $T$ is any spanning tree of $G$ and $X\subseteq E(T), Y\subseteq E(G)\backslash E(T)$. Let $G_X$ be the ribbon graph obtained from $G$ by applying a twist to all edge-ribbons in $X$. Then $Aux((G_X)^{\times|Y},T)=Aux(G_X,T)^{\times|Y}$ by Definition \emph{\ref{Constructbouquet}}.
\end{Remark}
\vspace{-0.3em}
\begin{figure}[htbp]
	\centering
	\includegraphics[width=0.98\textwidth]{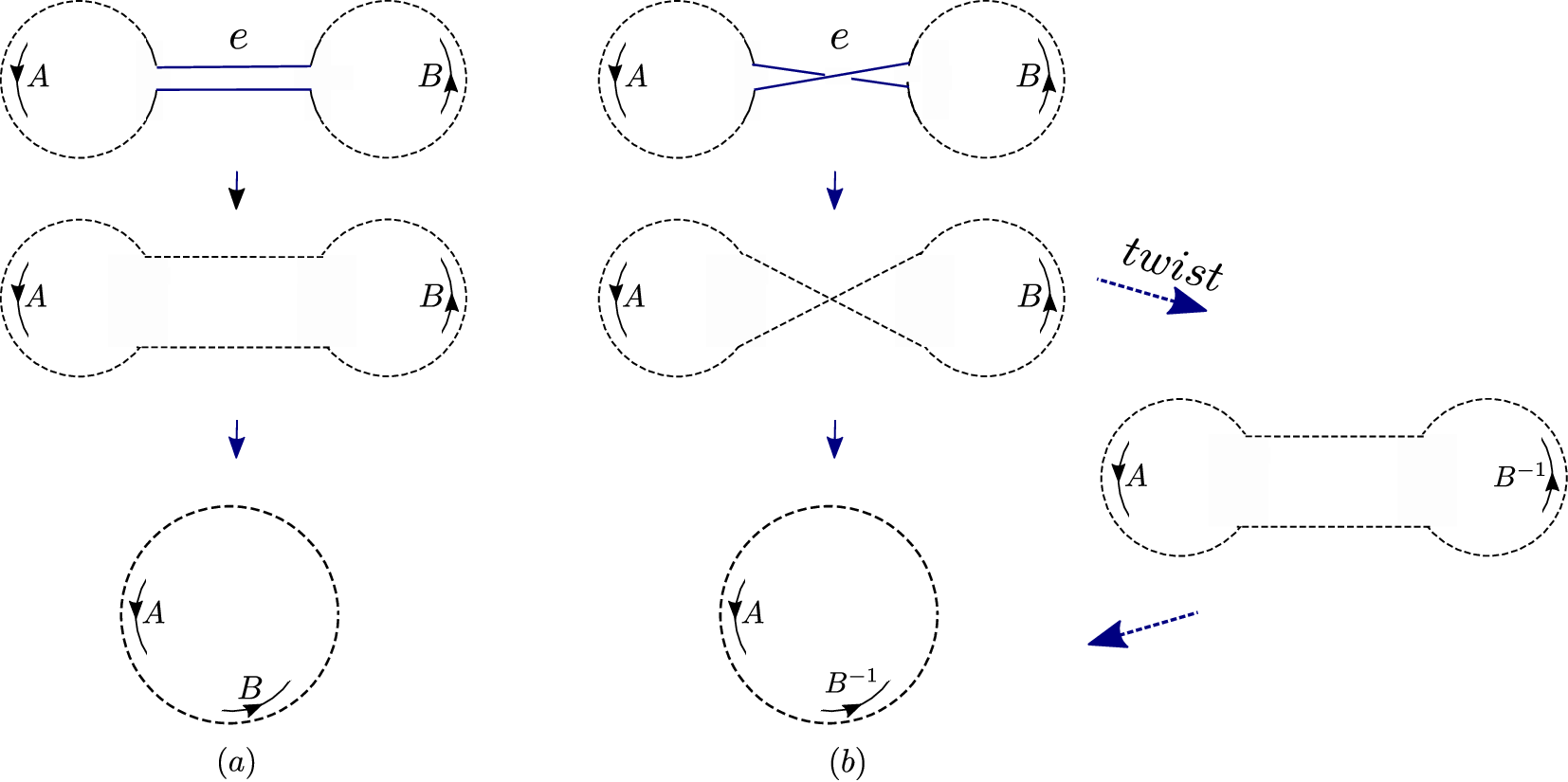}
	\caption{The ribbon graph $G/e$, where the edge $e$ in ($a$) (resp. ($b$)) is untwisted (resp. twisted).}\label{step2}
\end{figure}
\vspace{-1em}
\begin{lem}\label{ver(G)=ver(aux(G,T))}
	Let $G$ be a connected ribbon graph. Assume that $T$ is a spanning tree of $G$. Then
	$$\varepsilon(G) = \varepsilon(Aux(G,T)).$$
\end{lem}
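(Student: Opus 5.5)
We argue by induction on the number $|E(T)|=n-1$ of tree edges, showing that contracting a single proper (non-loop) edge-ribbon preserves the Euler genus. Since $Aux(G,T)$ arises from $G$ by contracting the edges of $T$ one at a time, the equality $\varepsilon(G)=\varepsilon(Aux(G,T))$ then follows. One point needs checking for the induction: after contracting an edge $e\in E(T)$, the remaining edges $E(T)\setminus\{e\}$ must still form a spanning tree of $G/e$, so that every later contraction is again of a non-loop edge. This holds because contracting a tree edge of a spanning tree yields a spanning tree of the contracted graph. Connectivity is also preserved, so $c(G/e)=c(G)=1$ at every stage.

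For the single-contraction step, let $e$ join distinct vertex-discs $u$ and $v$. We compare $\chi(G)=|V|-|E|+|F|$ with $\chi(G/e)$.
\begin{itemize}
\item The vertex count drops by one, since $u$, $v$ and $e$ merge into the single disc $e\cup u\cup v$.
\item The edge count drops by one, since $e$ is removed.
\item The faces are unchanged. Topologically, $u\cup e\cup v$ is a disc, because two discs glued along one boundary arc form a disc, whether or not $e$ is twisted. Hence $G/e$ is literally the same surface with boundary as $G$, with only the cell decomposition into vertex-discs and edge-ribbons altered. In particular the boundary components, which are the faces, coincide.
\end{itemize}
Therefore $\chi(G/e)=\chi(G)$ and $c(G/e)=c(G)$, and Lemma~\ref{varepsilon_ribbon} gives $\varepsilon(G/e)=\varepsilon(G)$. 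Alternatively, one can note that $G$ and $G/e$ are homeomorphic surfaces and invoke Lemma~\ref{thm:compact-surface-equivalence}: they have the same number of boundary components, the same orientability and the same Euler characteristic.

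The main point needing care is matching this topological picture with the combinatorial description of $G/e$ in Fig.~\ref{step2}, particularly when $e$ is twisted. In that case the half-edges at $v$ are reversed and sign-flipped (the $B^{-1}$ in the figure), and one must confirm that this signed rotation system really describes the disc $u\cup e\cup v$ with its attached ribbons, so that no twist is gained or lost. We would check this by tracing boundary components. A face of $G$ passing along $e$ travels one side of $e$ from $u$ to $v$, follows the rotation at $v$, and returns along the other side. In $G/e$ the same face follows the rotation at $u$ directly into the inserted block $B$ (or $B^{-1}$). A local case check on the two sides of $e$, with $e$ untwisted or twisted, shows the face permutations correspond bijectively. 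This confirms $|F(G/e)|=|F(G)|$ and completes the induction.
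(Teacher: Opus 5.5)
Your proposal is correct and uses essentially the same idea as the paper: contracting tree edges leaves the underlying surface with boundary unchanged, so $\chi$ is preserved, and hence so is $\varepsilon$ (with $c=1$ throughout). The paper contracts all of $T$ at once, observing that $V(G)\cup E(T)$ becomes a single disc so that $G$ and $Aux(G,T)$ are homeomorphic, and then applies Lemma~\ref{thm:compact-surface-equivalence} and Lemma~\ref{varepsilon_ribbon}; your edge-by-edge induction, with the count $|V|-1$, $|E|-1$ and $|F|$ unchanged, is a more explicit version of the same argument.
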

\begin{proof}
Recall that the ribbon graph $Aux(G,T)$ is obtained from $G$ by contracting all edge-ribbons in $T$. Since ribbon graphs are surfaces with boundaries, the surface $Aux(G,T)$ can be seen as being obtained from the surface $G$ by continuously deforming all edge-ribbons in $T$ such that $E(T)\cup V(G)$ becomes the unique vertex-disc of $Aux(G,T)$ (see Fig.\ref{step2}). Thus the ribbon graphs $G$ and $Aux(G,T)$ are homeomorphic through continuous deformation from the perspective of surfaces. According to Lemma \ref{thm:compact-surface-equivalence}, two homeomorphic surfaces have the same Euler characteristic, which implies that $\chi(G)=\chi(Aux(G,T))$. It follows that $\varepsilon(G)=\varepsilon(Aux(G,T))$ by Lemma \ref{varepsilon_ribbon}. 
\end{proof}

\noindent\emph{\textbf{Proof of Theorem \emph{\ref{G=aux(G,T)}}}~~} Since $Aux(G,T)$ is a bouquet, one has $$\varepsilon(Aux(G,T))=\text{rank}(adj(SI(Aux(G,T))))%_{\mathbb{Z}_2}
$$ by Lemma \ref{rank=genusofbouquet}.
 Combined with Lemma \ref{ver(G)=ver(aux(G,T))}, Theorem \ref{G=aux(G,T)} holds.
\hfill{$\square$}

\subsection{A proof of Theorem \ref{rankofpolyPetrial_ribbon}}

\vspace{0.6em}
\noindent\emph{\textbf{Proof of Theorem \emph{\ref{rankofpolyPetrial_ribbon}}}~~}
For any ribbon graph $G$, there is a $Z\subseteq E(G)$ such that the partial Petrial $G^{\times|Z}$ with all edges untwisted. Since  $^{\partial}{\varepsilon^{\times}_{G}}(z)=^{\partial}{\varepsilon^{\times}_{G^{\times|Z}}}(z)$ as Lemma \ref{G,GA}, we assume that all edges of $G$ are untwisted. 
For each $X\subseteq E(T)$, define $V_X=\{X\cup Y \mid Y\subseteq T^{c}\},$ 
where $T^{c}=E(G)\backslash E(T)$. Then the collection $\{ V_{X} \mid X \subseteq E(T) \}$ forms a partition of the power set of $E(G)$ into $2^{n-1}$ disjoint sets. By Definition \ref{poly_Petrial}, one has $$^{\partial}{\varepsilon^{\times}_{G}}(z)=\sum\limits_{A\subseteq E(G)}z^{\varepsilon{(G^{\times|A})}}=\sum_{X \subseteq E(T)} \sum_{A \in V_X}z^{\varepsilon{(G^{\times|A})}}
=\sum_{X \subseteq E(T)} \sum_{Y \subseteq T^c}
z^{\varepsilon{(G^{\times|(X\cup Y)})}}.$$ 
Recall that $G_X$ is obtained from $G$ by applying a twist to all edge-ribbons in $X$, we have that $G^{\times|(X\cup Y)}=(G_X)^{\times|Y}$, which implies that $\varepsilon(G^{\times|(X\cup Y)})=\varepsilon((G_X)^{\times|Y}).$ Since the auxiliary graph of $(G_X)^{\times|Y}$ with respect to $T$ is $Aux((G_X)^{\times|Y},T)$, we have that $\varepsilon((G_X)^{\times|Y})=\varepsilon(Aux((G_X)^{\times|Y},T))$ 
by Lemma \ref{ver(G)=ver(aux(G,T))}. According to Remark \ref{auxtwist}, one has $Aux((G_X)^{\times|Y},T)=Aux(G_X,T)^{\times|Y}$, which implies that  $\varepsilon(G^{\times|(X\cup Y)})=\varepsilon((Aux(G_X,T))^{\times|Y})$. Thus $$^{\partial}{\varepsilon^{\times}_{G}}(z)=\sum_{X \subseteq E(T)} \sum_{Y \subseteq T^c}
z^{\varepsilon{(G^{\times|(X\cup Y)})}}=
\sum_{X \subseteq E(T)} \sum_{Y \subseteq T^c}
z^{\varepsilon((Aux(G_X,T))^{\times|Y})}=
\sum_{X \subseteq E(T)}~^{\partial}{\varepsilon^{\times}_{Aux(G_X,T)}}(z).$$ 
%We divide the family of all the subsets of $E(G)$ into the disjoint union of $2^{n-1}$ parts based on any $T^{\prime}\subseteq T$ such that $V_{T^{\prime}}=\{A\subseteq E(G)~|~T^{\prime}\subseteq A\}$.

 Since $aux(G_X,T)$ is a bouquet, by Lemma \ref{Petrialofbouquet-rank}, we have that
\[\sum_{X \subseteq E(T)}~^{\partial}{\varepsilon^{\times}_{Aux(G_X,T)}}(z)=\sum_{X \subseteq E(T)}\sum_{Y \subseteq T^c} z^{\text{rank}(\text{adj}(I(Aux(G_X,T)))+\mathbb{D}_{Y})%_{\mathbb{Z}_2}
}.\]
The proof is finished.
\hfill{$\square$}

Next, we introduce an algorithm to calculate the partial Petrial polynomial by Theorem \ref{rankofpolyPetrial_ribbon}.

\begin{algorithm}[H]\label{alg1}
	\small
	\caption{~}\label{algorithm2}
	\begin{algorithmic}[The partial Petrial polynomial of  a ribbon graph]
		\REQUIRE Given an orientable connected ribbon graph $G$ with $m$ edges and $n$ vertices ($n\geq 2$) and an edge subset $T$ of a spanning tree of $G$.\\
		\hspace{-1.7em}\textbf{Input}  The rotation system of $G$; the edge subset $T$. \\%; %, where $T$ is the set of all twisted edges of ${B_n}$; an integer $k=0$.  \\
		\hspace{-1.7em}\textbf{Output} The partial Petrial polynomial  $^{\partial}{\varepsilon^{\times}_{G}}(z)$ of $G$.  \\
		\hspace{-1.7em}\noindent \textbf{01:} Set integers  $k=1$, $l=1$,  $^{\partial}{\varepsilon^{\times}_{G}}(z)=0$ and sign $z$.\\%Let $U=\emptyset$, $P$ be the set of all untwisted edges of $B_n$ interlaced with $T$, $X=(T\cup P\cup U)^{c}$.\\
		%\hspace{-1.7em}\noindent \textbf{2: while $X\neq\emptyset$ do}.\\
		\hspace{-1.7em}\noindent \textbf{02: }Get all subsets of $T$ (resp. $\mathbb{Z}_{m-n+1}$) and store them in the matrix $\mathbb{T}$ (resp. $\mathbb{U}$).\\
	    \hspace{-1.7em}\noindent \textbf{03: For}  $1\leq k\leq 2^{n-1}$ \textbf{do}\\
	    \hspace{-1.7em}\noindent \textbf{04:}\hspace{1.1em} %\textbf{Update} 
	    Generate the signed rotation of the bouquet $aux(G,\mathbb{T}_k)$ and store it in matrix $X_k$.\\
	    \hspace{-1.7em}\noindent \textbf{05:}\hspace{1.1em}
	    Generate the adjacency matrix $M_k$ of the intersection graph of $X_k$.\\
	    \hspace{-1.7em}\noindent \textbf{06:}\hspace{1.1em}
	     \textbf{For}  $1\leq l\leq 2^{m-n+1}$ \textbf{do}\\
	    \hspace{-1.7em}\noindent \textbf{07:}\hspace{2.2em}
	    Calculate $a_l=\text{rank}(M_k+\mathbb{D}_{\mathbb{U}_l})$ over $\mathbb{GF}(2)$.\\
	    \hspace{-1.7em}\noindent \textbf{08:}\hspace{2.2em}
	    Update $^{\partial}{\varepsilon^{\times}_{G}}(z)=^{\partial}{\varepsilon^{\times}_{G}}(z)+z^{a_l}$, $l=l+1$.\\
	    \hspace{-1.7em}\noindent   \textbf{09:}\hspace{1.1em} \textbf{end for}.\\
		\hspace{-1.7em}\noindent \textbf{10:}\hspace{1.1em} \textbf{Update} $k\leftarrow k+1$;\\
		\hspace{-1.7em}\noindent \textbf{11: end for}.\\
		\hspace{-1.7em}\textbf{12: return}~$^{\partial}{\varepsilon^{\times}_{G}}(z)$.\\
		%\indent{\textbf{end}}\\
		%\ENSURE~~
	\end{algorithmic}
\end{algorithm}

We can calculate $^{\partial}{\varepsilon^{\times}_{G}}(z)$ for some ribbon graphs as examples by using this algorithm with the help of computer. For example, let the rotation system of $G$ be $\{\rho_{v_i}\mid i\in \mathbb{Z}_6\}$, where 
$\rho_{v_1}:(1,8,12), \rho_{v_2}:(9,4,2,3,8), \rho_{v_3}:(11,10,5,6,9), \rho_{v_4}:(7,4,11), \rho_{v_5}:(5,2,1,6,12), \rho_{v_6}:(3,7,10)$, then one has $^{\partial}{\varepsilon^{\times}_{G}}(z)=1412z^7 + 1692z^6 + 779z^5 + 189z^4 + 23z^3 + z^2$.

\section{A four-term relation for chord diagram }
In this section, we first present an equivalence relation on four chord diagrams with respect to their partial Petrial polynomials, which is similar to the four-term relation. Then we define a modified partial Petrial polynomial by assigning  coefficients +1 or -1 to every term in the partial Petrial polynomial such that the resulting polynomial satisfies the four-term relation of graphs.

\subsection{A proof of Theorem \ref{pPpva}}
First, we introduce two types of transformations on chord diagrams.
\vspace{-0.3em}
\begin{defi}
	For any chord diagram $B$ and chords $a$ and $b$ of $B$, if one endpoint of $a$ and one endpoint of $b$ are adjacent in the signed rotation of $B$, we call  chords $a$ and $b$ are adjacent in $B$.
\end{defi}
\vspace{-0.8em}
\begin{figure}[htbp]
	\centering
	\includegraphics[%height=6.6cm,
	width=0.56\textwidth]{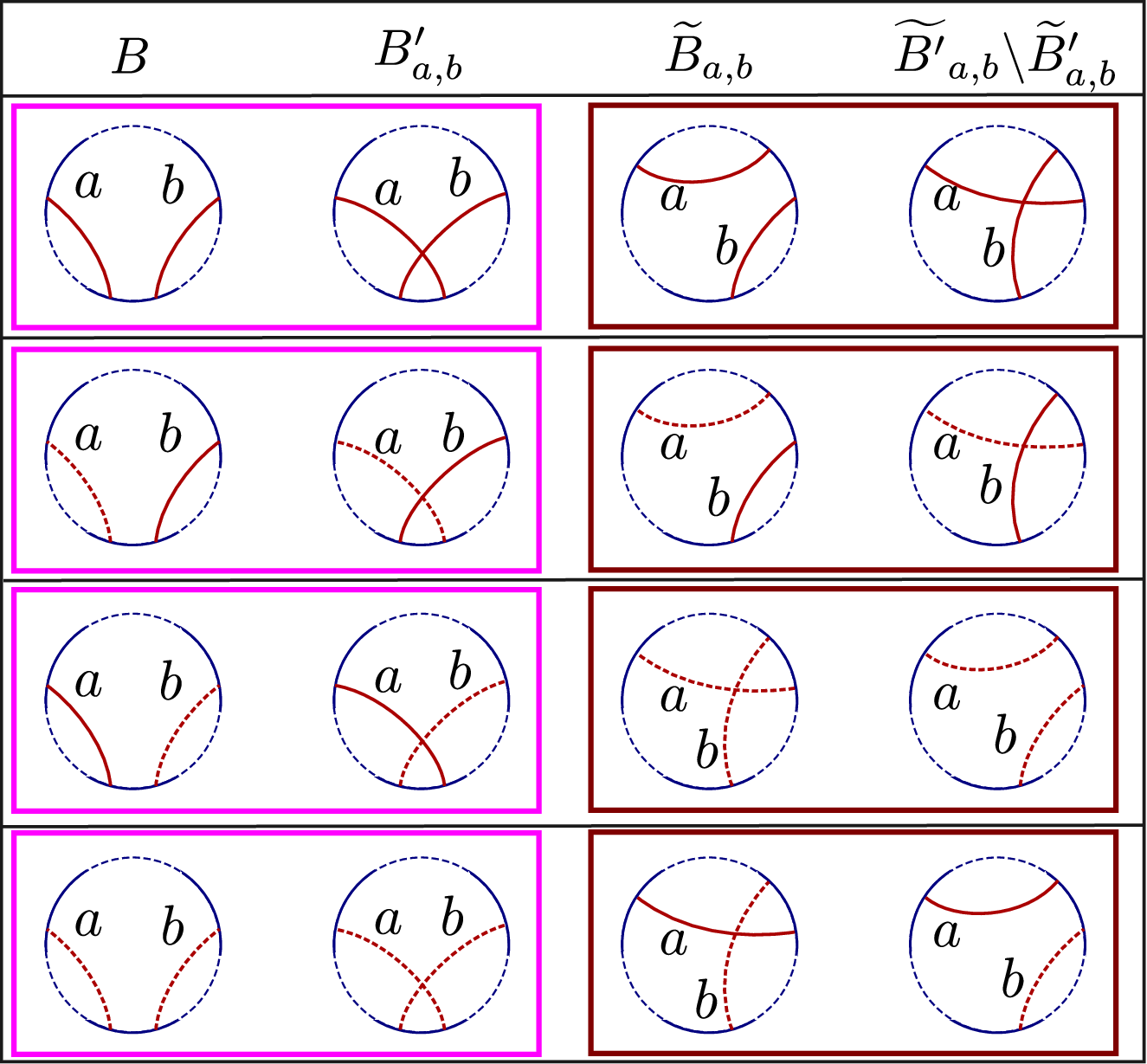}
	\caption{Three kinds of transformations of the chord diagram $B$.}	\label{B_slide_exchange}
\end{figure}

\vspace{-1.2em}
\begin{defi}
	For any chord diagram $B$, let $a$ and $b$ be two adjacent chords of $B$. We define two transformations of $B$ as follows (see Fig. \emph{\ref{B_slide_exchange}}):
	\noindent  The chord diagram ${B}^{\prime}_{a,b}$ is obtained from $B$ by exchanging the two adjacent endpoints of chords $a$ and $b$, and the chord diagram $\widetilde{B}_{a,b}$ is obtained from $B$ by sliding chord $a$ along $b$ at the endpoint of $a$ that is adjacent to $b$.
\end{defi}
\begin{Remark}
	It is easy to check that $(\widetilde{B}_{a,b})^{\prime}_{a,b}=\widetilde{(B^{\prime}_{a,b})}_{a,b}$. We use $\widetilde{B^{\prime}}_{a,b}$ or $\widetilde{B}^{\prime}_{a,b}$ to denote the composition of these two operations in this paper.
\end{Remark}

\vspace{-1em}
\begin{Remark}
	Since chord diagrams $\widetilde{B}_{a,b}$,  $B^{\prime}_{a,b}$ and $\widetilde{B}^{\prime}_{a,b}$ can be obtained from $B$, there exist one-to-one correspondences between these edge sets. Thus we do not distinguish them.
\end{Remark}

\noindent\emph{\textbf{Proof of Theorem \emph{\ref{pPpva}}}~~}
We will prove that 
\begin{equation}\label{eq3.1}
^{\partial}{\varepsilon_{B}^{\times}}(z) + ^{\partial}{\varepsilon_{B^{\prime}_{a,b}}^{\times}}(z) = 
^{\partial}{\varepsilon_{\widetilde{B}_{a,b}}^{\times}}(z)+ ^{\partial}{\varepsilon_{\widetilde{B^{\prime}}_{a,b}}^{\times}}(z).
\end{equation}
By the definition of $B^{\prime}_{a,b}$, we know that exactly one of $B$ and $B^{\prime}_{a,b}$ satisfies that edges $a$ and $b$ are interlaced. Since $(B^{\prime}_{a,b})^{\prime}_{a,b}=B$, to prove Equation (\ref{eq3.1}), it suffices to prove Equation (\ref{eq3.1}) under the assumption that $a$ and $b$ are not interlaced in $B$. Based on whether \(a\) and \(b\) are twisted or not, ignoring all edges in $B$ except for $a$ and $b$, the graph $B$ falls into the following four cases (corresponding to the first column of Fig. \ref{B_slide_exchange}). In each of the four cases of $B$, the graphs $B^{\prime}_{a,b}$, $\widetilde{B}_{a,b}$ and $\widetilde{B^{\prime}}_{a,b}$ are uniquely determined (corresponding to each row of  Fig. \ref{B_slide_exchange}). For four different cases of $B$, we know that $I(B)\cup I(B^{\prime}_{a,b})$ (resp. $I(\widetilde{B}_{a,b})\cup I(\widetilde{B^{\prime}}_{a,b})$) are always the same (see Fig. \ref{B_slide_exchange}, where the bouquets in boxes with the same color have the same intersection graphs). By Lemma \ref{Petrial_orientable}, it suffices to consider the case in which all edges of $B$ are untwisted (see first row of Fig. \ref{B_slide_exchange}). %It is easy to see that the bouquets $B^{\prime}_{a,b},\widetilde{B}_{a,b},\widetilde{B^{\prime}}_{a,b}$ are orientable. 
%Since  $B,B^{\prime}_{a,b},\widetilde{B}_{a,b},\widetilde{B^{\prime}}_{a,b}$ have the same edges, based on $a,b\in E(B)$, w

%\vspace{-0.3em}
\begin{figure}[htbp]
	\centering
	\includegraphics[%height=6.6cm,
	width=0.85\textwidth]{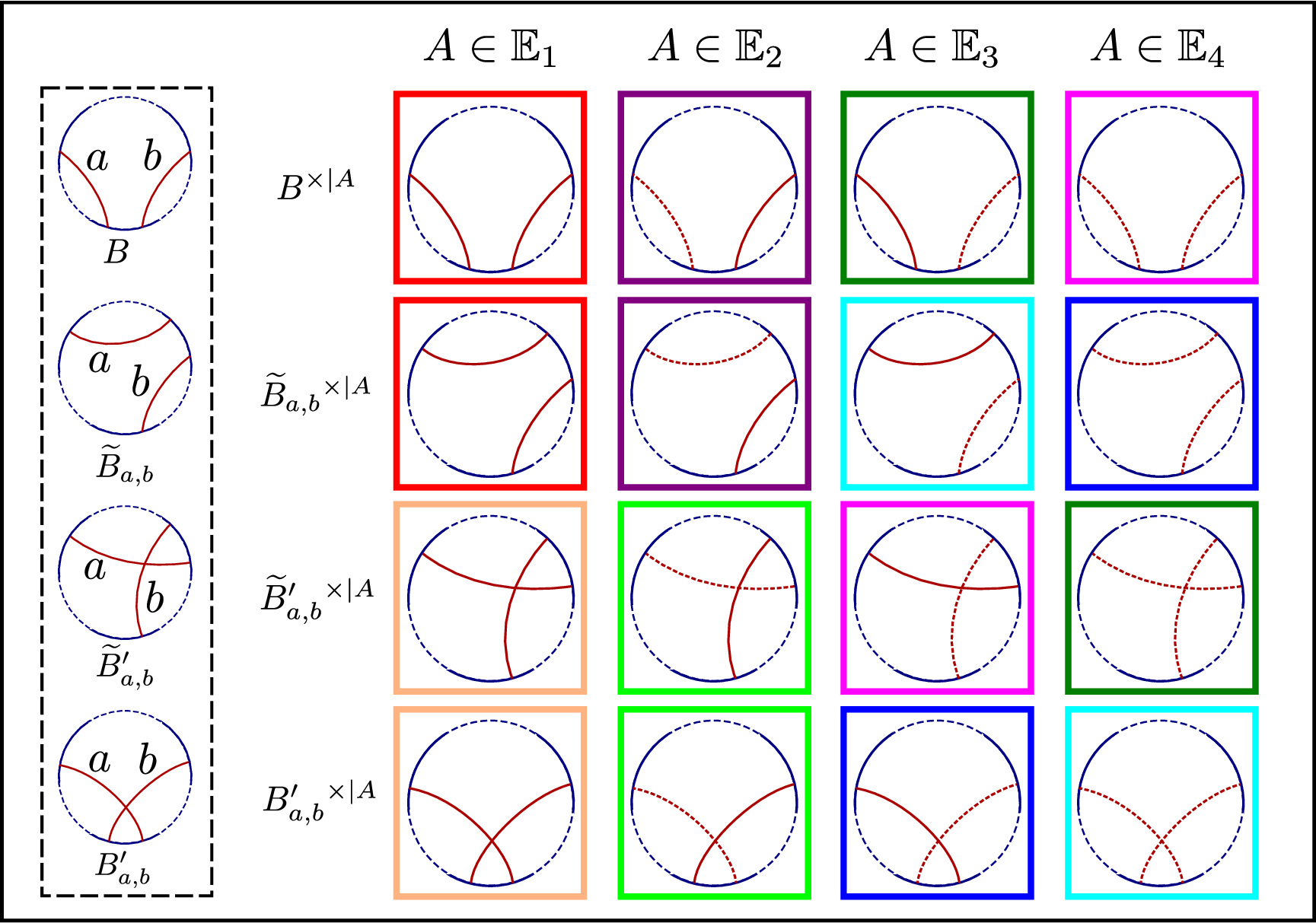}\\
	\caption{The partial Petrial graphs $B^{\times|A},\widetilde{B}_{a,b}{}^{\times|A},\widetilde{B}^{\prime}_{a,b}{}^{\times|A}$, and $B^{\prime}_{a,b}{}^{\times|A}$, where two bouquets in boxes of the same color can be obtained by sliding one edge.}%The proof of four-term relations of ($T_1$).}
\label{Fig_T_1}
\end{figure}

\vspace{-0.8em}
We divide the family of all the subsets of $E(B)$ into the disjoint union of four parts $\mathbb{E}_1\cup \mathbb{E}_2 \cup \mathbb{E}_3 \cup \mathbb{E}_4$ based on whether a subset contains $a$ or $b$, where
$\mathbb{E}_1 = \{A \subseteq E(B)\mid a\notin A, b \notin A\}$,
$\mathbb{E}_2 = \{A \subseteq E(B)\mid a\in A, b \notin A\}$,
$\mathbb{E}_3 = \{A \subseteq E(B)\mid a\notin A, b \in A\}$,
$\mathbb{E}_4 = \{A \subseteq E(B)\mid a\in A, b \in A\}$. Since edge-ribbon sliding operation keeps the genus of surface as Lemma \ref{slide}, we have  (see Fig.\ref{Fig_T_1})

\vspace{-1.5em}
\begin{align}\label{equationbyslide}
\begin{cases}
\varepsilon(B^{\times|A})=\varepsilon({\widetilde{B}_{a,b}}{}^{\times|A}),
\varepsilon({{B}^{\prime}_{a,b}}^{\times|A})=
\varepsilon({\widetilde{B}^{\prime}_{a,b}}{}^{\times|A}),
& \text{if~} A\in \mathbb{E}_1 \text{~or~} A\in \mathbb{E}_2;\\
%\varepsilon(B^{\times|A})=\varepsilon({\widetilde{B}^{\prime}_{a,b}}{}^{\times|(A\Delta \{a\})}),\varepsilon({B^{\prime}_{a,b}}{}^{\times|A})=\varepsilon({\widetilde{B}_{a,b}}^{\times|(A\Delta\{a\})}),& \text{if~} A\in E_3;\\
\varepsilon(B^{\times|A})=\varepsilon({\widetilde{B}^{\prime}_{a,b}}{}^{\times|(A\Delta \{a\})}),
\varepsilon({{B}^{\prime}_{a,b}}{}^{\times|A})=\varepsilon({\widetilde{B}_{a,b}}{}^{\times|(A\Delta\{a\})}),
& \text{if~} A\in \mathbb{E}_3 \text{~or~} A\in \mathbb{E}_4.
\end{cases}
\end{align}

\vspace{-0.3em}
Thus, one has that
\vspace{-0.3em}
\[\begin{aligned}
	^{\partial}{\varepsilon^{\times}_{B}}(z) + ^{\partial}{\varepsilon_{B^{\prime}_{a,b}}^{\times}}(z)&=
	\sum\limits_{A\subseteq E(B)}z^{\varepsilon(B^{\times|A})} + \sum\limits_{A\subseteq E(B^{\prime}_{a,b})}z^{\varepsilon({{B}^{\prime}_{a,b}}^{\times|A})}&  \text{Definition}~\ref{poly_Petrial}\\
	&=\sum\limits_{A\in \mathbb{E}_1\cup \mathbb{E}_2\cup \mathbb{E}_3\cup \mathbb{E}_4} (z^{\varepsilon(B^{\times|A})}+z^{\varepsilon({B^{\prime}_{a,b}}^{\times|A})}
	)\\
	&=\sum\limits_{A\in \mathbb{E}_1\cup \mathbb{E}_2} (z^{\varepsilon(\widetilde{B}_{a,b}{}^{\times|A})}+z^{\varepsilon({\widetilde{B}^{\prime}_{a,b}}{}^{\times|A})}
	)\\
	&+\sum\limits_{A\in \mathbb{E}_3\cup \mathbb{E}_4} (z^{\varepsilon(\widetilde{B}_{a,b}{}^{\times|(A\Delta\{a\})})}+z^{\varepsilon({\widetilde{B}^{\prime}_{a,b}}{}^{\times|(A\Delta\{a\})})})
	&\text{Equation } (\ref{equationbyslide})\\
	&=\sum\limits_{A\in \mathbb{E}_1\cup \mathbb{E}_2\cup \mathbb{E}_3\cup \mathbb{E}_4} (z^{\varepsilon(\widetilde{B}_{a,b}{}^{\times|A})}+z^{\varepsilon({\widetilde{B}^{\prime}_{a,b}}{}^{\times|A})}
	)
	& 3,4\text{-th column of Fig}.\ref{Fig_T_1}\\
	&=
	^{\partial}{\varepsilon^{\times}_{\widetilde{B}_{a,b}}}(z) + ^{\partial}{\varepsilon_{\widetilde{B}^{\prime}_{a,b}}^{\times}}(z).
	&  \text{Definition}~\ref{poly_Petrial}
\end{aligned} \]

\vspace{-0.3em}
It is finished.
%$$^{\partial}{\varepsilon^{\times}_{B}}(z) + ^{\partial}{\varepsilon_{B^{\prime}_{a,b}}^{\times}}(z)=\sum\limits_{A\subseteq E(B)}z^{\varepsilon(B^{\times|A})} + \sum\limits_{A\subseteq E(B)}z^{\varepsilon({{B}^{\prime}_{a,b}}^{\times|A})}=\sum\limits_{i=1}^{4}\sum\limits_{A\subseteq E_i} (z^{\varepsilon(B^{\times|A})}+z^{\varepsilon({B^{\prime}_{a,b}}^{\times|A})}).$$
\hfill{$\square$}
\subsection{A proof of Theorem \ref{rankoffourterm}}
Inspired by Theorem \ref{pPpva}, a modified polynomial of the partial Petrial polynomial is given.

\vspace{-0.3em}
\begin{defi}\label{modifiedpartialPetrial polynomial}
	The modified partial Petrial polynomial of any bouquet $B$ is defined as %the generating~function
	%\vspace{-0.3em}
	$$^{M\partial}{\varepsilon^{\times}_{B}}(z)=\sum\limits_{A\subseteq E(B)}(-1)^{|A|}z^{\varepsilon{(B^{\times|A})}}.$$
	%\noindent that enumerates partial Petrials of $B$ by Euler genus. 
\end{defi}

Lemma \ref{mfourterm} can be proved in a similar way as the proof of Theorem \ref{pPpva}. 
\vspace{-0.3em}
\begin{lem}\label{mfourterm}
	The modified partial Petrial polynomial satisfies the four-term relation.
\end{lem}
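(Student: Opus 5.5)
The plan is to reuse the partition argument from the proof of Theorem \ref{pPpva} while tracking the sign $(-1)^{|A|}$. The four-term relation in the form of a $4$-invariant is
\[
^{M\partial}{\varepsilon_{B}^{\times}}(z) - ^{M\partial}{\varepsilon_{\widetilde{B}_{a,b}}^{\times}}(z) + ^{M\partial}{\varepsilon_{\widetilde{B}^{\prime}_{a,b}}^{\times}}(z) - ^{M\partial}{\varepsilon_{B^{\prime}_{a,b}}^{\times}}(z)=0 .
\]
The signs here differ from those in Theorem \ref{pPpva}, because reindexing by $A\mapsto A\Delta\{a\}$ now flips the sign of each term.

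\textbf{Step 1: reductions.} The displayed relation only changes by an overall sign if we exchange $B\leftrightarrow B^{\prime}_{a,b}$ and $\widetilde{B}_{a,b}\leftrightarrow \widetilde{B}^{\prime}_{a,b}$. Since $(B^{\prime}_{a,b})^{\prime}_{a,b}=B$, we may therefore assume that $a$ and $b$ are not interlaced in $B$.

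Next we reduce to the case where all chords are untwisted. Lemma \ref{Petrial_orientable} cannot be quoted directly here, because the modified polynomial is not invariant under partial Petrials. Instead we use the identity
\[
^{M\partial}{\varepsilon_{B^{\times|Z}}^{\times}}(z)=(-1)^{|Z|}\,{}^{M\partial}{\varepsilon_{B}^{\times}}(z),
\]
which follows by substituting $A\mapsto A\Delta Z$ in Definition \ref{modifiedpartialPetrial polynomial}. Choose $Z$ to be the set of twisted chords of $B$. The key claim is that $B^{\times|Z}$, $\widetilde{B}_{a,b}$, $B^{\prime}_{a,b}$, $\widetilde{B}^{\prime}_{a,b}$ transform compatibly. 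More precisely, the untwisted representatives of the other three diagrams (up to equal intersection graphs, as in the colored boxes of Fig. \ref{B_slide_exchange}) should be obtained by Petrializing the same set $Z$ under the edge identification. Then all four terms acquire the common factor $(-1)^{|Z|}$, and the relation reduces to the untwisted case.

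\textbf{Step 2: the main computation.} Split the subsets of $E(B)$ into $\mathbb{E}_1,\dots,\mathbb{E}_4$ as before and write $M_{\mathbb{E}}(H)=\sum_{A\in\mathbb{E}}(-1)^{|A|}z^{\varepsilon(H^{\times|A})}$.

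For $A\in\mathbb{E}_1\cup\mathbb{E}_2$, Equation (\ref{equationbyslide}) matches terms with the same $A$, hence with the same sign. For $A\in\mathbb{E}_3\cup\mathbb{E}_4$, it matches $A$ with $A\Delta\{a\}$, which flips the sign. The map $A\mapsto A\Delta\{a\}$ is a bijection of $\mathbb{E}_3\cup\mathbb{E}_4$ onto itself, since it preserves whether $b\in A$. This gives
\[
^{M\partial}{\varepsilon_{B}^{\times}}=M_{\mathbb{E}_1\cup\mathbb{E}_2}(\widetilde{B}_{a,b})-M_{\mathbb{E}_3\cup\mathbb{E}_4}(\widetilde{B}^{\prime}_{a,b}),
\]
\[
^{M\partial}{\varepsilon_{B^{\prime}_{a,b}}^{\times}}=M_{\mathbb{E}_1\cup\mathbb{E}_2}(\widetilde{B}^{\prime}_{a,b})-M_{\mathbb{E}_3\cup\mathbb{E}_4}(\widetilde{B}_{a,b}).
\]
Subtracting the second from the first yields
\[
^{M\partial}{\varepsilon_{B}^{\times}}-{}^{M\partial}{\varepsilon_{B^{\prime}_{a,b}}^{\times}}={}^{M\partial}{\varepsilon_{\widetilde{B}_{a,b}}^{\times}}-{}^{M\partial}{\varepsilon_{\widetilde{B}^{\prime}_{a,b}}^{\times}},
\]
which is exactly the desired relation.

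\textbf{Main obstacle.} The delicate point is the sign bookkeeping in Step 1. Sliding $a$ along $b$ can change the framing of $a$, so I must verify case by case, using the four rows of Fig. \ref{B_slide_exchange}, that the four diagrams can be simultaneously brought to their untwisted forms by Petrializing the same number of chords modulo $2$. If this parity fails in some row, I would instead carry out the Step 2 computation directly in that row, reading off the slide equalities from the analogue of Fig. \ref{Fig_T_1} for the twisted case.
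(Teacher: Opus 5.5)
Your Step 2 is what the paper means by ``can be proved in a similar way as the proof of Theorem \ref{pPpva}'' (the paper gives no more detail than that), and your sign bookkeeping there is correct. The first reduction in Step 1 is also fine. The problem is the ``key claim'' of the second reduction: it is false in exactly the rows of Fig.~\ref{B_slide_exchange} where $b$ is twisted. Sliding $a$ along a twisted $b$ flips the framing of $a$ \emph{and} toggles whether $a$ and $b$ interlace. Two consequences follow. First, the twisted chords of $\widetilde B_{a,b}$ and of $\widetilde B'_{a,b}$ form $Z\Delta\{a\}$, not $Z$, so their untwisted representatives carry the factor $(-1)^{|Z|+1}$. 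Second, $(\widetilde B_{a,b})^{\times|Z\Delta\{a\}}$ has the same signed intersection graph as $\widetilde{(B^{\times|Z})}'_{a,b}$, not as $\widetilde{(B^{\times|Z})}_{a,b}$, and symmetrically for $\widetilde B'_{a,b}$. The reduction survives only because these two defects cancel. Write $f={}^{M\partial}\varepsilon^{\times}$. Swapping the roles of the two slid diagrams negates $f(\widetilde B_{a,b})-f(\widetilde B'_{a,b})$, and the parity shift negates it back. So the relation for $B$ is still $(-1)^{|Z|}$ times the relation for $B^{\times|Z}$, but not for the reason you give. The swap is invisible in Theorem \ref{pPpva}, where only the unordered pair $\{I(\widetilde B_{a,b}),I(\widetilde B'_{a,b})\}$ matters, but it becomes essential once signs are attached.

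Your fallback is the cleaner fix, and it works in every row at once. Equation (\ref{equationbyslide}) holds verbatim for all framings of $a,b$, whether or not they interlace. However, Fig.~\ref{Fig_T_1} only exhibits the untwisted case, so this has to be proved. Put $Q=\text{adj}(SI(B))$, so that $\text{adj}(SI(B^{\times|A}))=Q+\mathbb{D}_A$. The slide is the congruence $Q\mapsto EQE^{T}$, where $E$ adds row $b$ to row $a$. It toggles $a$ against $N(b)$ and, exactly when $b$ is twisted, also toggles the pair $ab$ and the framing of $a$. The exchange adds the matrix $J$ with ones exactly at positions $(a,b)$ and $(b,a)$, and $EJE^{T}=J$. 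Moreover, $E\mathbb{D}_AE^{T}=\mathbb{D}_A$ if $b\notin A$, and $E\mathbb{D}_AE^{T}=\mathbb{D}_{A\Delta\{a\}}+J$ if $b\in A$. Lemma \ref{rank=genusofbouquet} then gives (\ref{equationbyslide}) in all cases. With that in hand, your Step 2 proves the lemma directly, and Step 1 can be dropped.
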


%We can observe that the modified partial Petrial polynomial satisfies the four-term relation. 

By Lemma \ref{rank=genusofbouquet}, the modified partial Petrial polynomial has the following equivalent expression. 

\vspace{-0.3em}
\begin{thm}\label{modifiedpartialPetrialpolynomial_rank}
	The {modified partial Petrial polynomial} of any bouquet $B$ has an exact expression: 
	$$^{M\partial}{\varepsilon^{\times}_{B}}(z)=\sum\limits_{A\subseteq E(B)}(-1)^{|A|}z^{\text{rank}(\text{adj}(SI(B^{\times|A})))}.$$
	%\noindent that enumerates partial Petrials of $B$ by Euler genus. 
\end{thm}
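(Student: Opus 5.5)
The statement follows by substituting Lemma \ref{rank=genusofbouquet} termwise into Definition \ref{modifiedpartialPetrial polynomial}. Fix a bouquet $B$. By Definition \ref{modifiedpartialPetrial polynomial},
$$^{M\partial}{\varepsilon^{\times}_{B}}(z)=\sum_{A\subseteq E(B)}(-1)^{|A|}z^{\varepsilon(B^{\times|A})}.$$
The plan is to rewrite each exponent $\varepsilon(B^{\times|A})$ as a matrix rank, leaving the sign $(-1)^{|A|}$ untouched.

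The first step is to check that every summand is again a bouquet. For each $A\subseteq E(B)$, the partial Petrial $B^{\times|A}$ only re-twists edge-ribbons at their attachment. It therefore has the same single vertex-disc and the same edge set as $B$. Since $B^{\times|A}$ has one vertex, it is a bouquet, and Lemma \ref{rank=genusofbouquet} applies to it. This gives
$$\varepsilon(B^{\times|A})=\text{rank}(\text{adj}(SI(B^{\times|A}))).$$

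The second step is to substitute this identity into each term of the sum. The index set $\{A\subseteq E(B)\}$ and the sign $(-1)^{|A|}$ are unaffected by the substitution, which yields exactly the claimed expression.

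The only point needing care is the legitimacy of applying Lemma \ref{rank=genusofbouquet} to every $B^{\times|A}$, including non-prime or non-orientable ones. That lemma is stated for an arbitrary bouquet: it comes from Lemmas \ref{varepsilon} and \ref{rank=1+e-f}, and Mellor's result covers the non-orientable case. So no primeness or orientability assumption is needed, and I do not expect a genuine obstacle.

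As an optional remark, one can also make the matrix explicit. Partial Petrial on $A$ toggles the twistedness of exactly the loops in $A$ and leaves all interlacings unchanged. Hence $\text{adj}(SI(B^{\times|A}))=\text{adj}(SI(B))+\mathbb{D}_A$ over $\mathbb{GF}(2)$, mirroring Lemma \ref{Petrialofbouquet-rank}. This refinement is not required for the statement.
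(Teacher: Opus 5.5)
Your proposal is correct and follows essentially the same route as the paper, which obtains the identity directly from Lemma \ref{rank=genusofbouquet}. Like you, the paper applies that lemma termwise to each partial Petrial $B^{\times|A}$, which is again a bouquet, inside the defining sum, leaving the signs $(-1)^{|A|}$ untouched.
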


%\begin{thm}
%Let $B$ be a framed chord diagram and let $a$ and $b$ be two adjacent chords of $B$, then the modified partial Petrial polynomial satisfies the following four-term relation
%\[^{M\partial}{\varepsilon_{B}^{\times}}(z) -^{M\partial}{\varepsilon_{\widetilde{B}_{a,b}}^{\times}}(z)+ ^{M\partial}{\varepsilon_{\widetilde{B^{\prime}}_{a,b}}^{\times}}(z) -^{M\partial}{\varepsilon_{B^{\prime}_{ab}}^{\times}}(z)=0.
%\]
%\end{thm}

\vspace{-0.3em}
In \cite{Petrial_Yan}, Yan and Li showed that the partial Petrial polynomial of a bouquet is determined by its intersection graph. We show a similar property for the {modified partial Petrial polynomial}.

If two bouquets $B_1$ and $B_2$ have the same intersection graph, there exists a bijection, denoted by $\phi$, between $E(B_1)$ and $E(B_2)$. 
\begin{lem}
	For bouquets \( B_1 \) and \( B_2 \) with the same intersection graph, let $T_1$ $($resp. $T_2$$)$ be the set of all twisted edges of $B_1$ $($resp. $B_2$$)$ and $\phi$ be the bijection between $E(B_1)$ and $E(B_2)$.
	Then  
	\vspace{-0.3em}
	$$^{M\partial}{\varepsilon^{\times}_{B_1}}(z)=(-1)^{|\phi(T_1)\Delta T_2|}\cdot{}^{M\partial}{\varepsilon^{\times}_{B_2}}(z).$$ 
\end{lem}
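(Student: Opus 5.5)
Let $D=\phi(T_1)\,\Delta\, T_2$. The idea is to reindex the sum in Definition \ref{modifiedpartialPetrial polynomial} so that it pairs each partial Petrial of $B_1$ with a partial Petrial of $B_2$ having the same signed intersection graph. The sign discrepancy then comes only from the change in cardinality of the index set.

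First, transport everything to $E(B_2)$ via $\phi$. For $A\subseteq E(B_1)$, the twisted loops of $B_1^{\times|A}$ are exactly $T_1\Delta A$. This holds because applying a Petrial to a loop of a bouquet toggles whether it is twisted, and it does not change interlacement: the cyclic order of the half-edge ends is unchanged, only the signs change. Hence $I(B_1^{\times|A})=I(B_1)$.

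Define the bijection $\psi$ on subsets by
\[
\psi(A)=\phi(A)\,\Delta\, D \subseteq E(B_2).
\]
The twisted edges of $B_2^{\times|\psi(A)}$ are
\[
T_2\Delta \phi(A)\Delta \phi(T_1)\Delta T_2=\phi(T_1\Delta A),
\]
which is exactly the $\phi$-image of the twisted set of $B_1^{\times|A}$. Since $I(B_2^{\times|\psi(A)})=I(B_2)\cong I(B_1)=I(B_1^{\times|A})$ under $\phi$, we get
\[
SI(B_1^{\times|A})=SI(B_2^{\times|\psi(A)}).
\]
Lemma \ref{varepsilion_SIB} (equivalently, Lemma \ref{rank=genusofbouquet} with Theorem \ref{modifiedpartialPetrialpolynomial_rank}) then gives $\varepsilon(B_1^{\times|A})=\varepsilon(B_2^{\times|\psi(A)})$.

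Next, compare the signs. Since $|X\Delta Y|\equiv |X|+|Y| \pmod 2$, we have
\[
(-1)^{|\psi(A)|}=(-1)^{|A|}(-1)^{|D|}.
\]
As $A$ ranges over all subsets of $E(B_1)$, $\psi(A)$ ranges over all subsets of $E(B_2)$. Summing gives
\[
{}^{M\partial}\varepsilon^{\times}_{B_2}(z)=(-1)^{|D|}\,{}^{M\partial}\varepsilon^{\times}_{B_1}(z),
\]
which is equivalent to the claim because $(-1)^{|D|}=\pm1$.

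The main point needing care is the claim that a partial Petrial leaves the (unsigned) intersection graph unchanged and only toggles the twist status of the chosen loops. I would verify this in the signed rotation system: the Petrial on a loop flips the sign of one of its two half-edges. This changes same-sign to opposite-sign and vice versa, and leaves the cyclic order intact, so interlacement is preserved. Everything else is bookkeeping with symmetric differences.
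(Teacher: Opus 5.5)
Your proof is correct and follows essentially the same route as the paper's: the same bijection $g(A)=\phi(A)\Delta(\phi(T_1)\Delta T_2)$, the identity $SI(B_1^{\times|A})=SI(B_2^{\times|g(A)})$ fed into Lemma~\ref{varepsilion_SIB}, and the same parity relation $|X\Delta Y|\equiv|X|+|Y|\pmod 2$. Your explicit check that a partial Petrial toggles twist status while keeping the cyclic order, and hence the intersection graph, fills in a step the paper only asserts ``by the definition of partial Petrial graph.''
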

\vspace{-1.2em}
\begin{proof}
	 Let $g$ be a mapping between the families of subsets of $E(B_1)$ and $E(B_2)$ defined by $g(A)=\phi(A)\Delta(\phi(T_1)\Delta T_2)$ for $A\subseteq E(B_1)$. Thus $g$ is a bijection and $SI(B_1^{\times|A})=SI(B_2^{\times|g(A)})$ by the definition of partial Petrial graph, which implies that  $\varepsilon(B_1^{\times| A})=\varepsilon(B_2^{\times|g(A)})$ by Lemma \ref{varepsilion_SIB}. Since $|\phi(A)\Delta (\phi(T_1)\Delta T_2)|=|\phi(A)|+|\phi(T_1)\Delta T_2|-2|\phi(A)\cap (\phi(T_1)\Delta T_2)|$, $|g(A)|$ and $|\phi(A)|+|\phi(T_1)\Delta T_2|$ are of the same parity. Thus $|g(A)|$ and $|\phi(A)|$ have the same parity if and only if $|\phi(T_1)\Delta T_2|$ is even. By definition \ref{poly_Petrial}, one has
	 \begin{align*}\label{equation_B=-B}
	 	^{M\partial}{\varepsilon^{\times}_{B_1}}(z)&=\sum\limits_{A\subseteq E(B_1)}(-1)^{|A|}z^{\varepsilon{(B_1^{\times|A})}}\\
	 	%&=\sum\limits_{A\subseteq E(B_1)}(-1)^{|A|}z^{\varepsilon{(B_2^{\times|g(A)})}}\\
	 	&=\sum\limits_{g(A)\subseteq E(B_2)}(-1)^{|\phi(A)|}z^{\varepsilon{(B_2^{\times|g(A)})}}\\
	 	&=\begin{cases}
	 		\sum\limits_{g(A)\subseteq E(B_2)}(-1)^{|g(A)|}z^{\varepsilon{(B_2^{\times|g(A)})}},
	 		& \text{if~} |\phi(T_1)\Delta T_2| \text{~is~even}\\
	 		\sum\limits_{g(A)\subseteq E(B_2)}(-1)^{|g(A)|+1}z^{\varepsilon{(B_2^{\times|g(A)})}},
	 		& \text{if~} |\phi(T_1)\Delta T_2| \text{~is~odd}\\
	 	\end{cases}\\
	 	%&=\begin{cases}
	 	%	^{M\partial}{\varepsilon^{\times}_{B_2}}(z),
	 	%	& \text{if~} |\phi(T_1)\Delta T_2| \text{~is~even}\\
	 	 %   -^{M\partial}{\varepsilon^{\times}_{B_2}}(z),
	 	%	& \text{if~} |\phi(T_1)\Delta T_2| \text{~is~odd}\\
	 	%\end{cases}\\
	 	&=(-1)^{|\phi(T_1)\Delta T_2|}\cdot{}^{M\partial}{\varepsilon^{\times}_{B_2}}(z).
	 \end{align*}	
	 
	 It is finished.
\end{proof}
\begin{coro}\label{signedintersectiondeterminedMpPp}
	Bouquets with the same signed intersection graph have the same modified partial Petrial polynomial.
\end{coro}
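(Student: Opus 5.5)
The corollary follows directly from the preceding lemma once we check that the sign factor equals $+1$. Let $B_1$ and $B_2$ be bouquets with $SI(B_1)=SI(B_2)$. Forgetting signs, they have the same intersection graph $I(B_1)=I(B_2)$. Take $\phi\colon E(B_1)\to E(B_2)$ to be the bijection that realizes the isomorphism of signed intersection graphs, i.e.\ the identification of vertices of $SI(B_1)$ with vertices of $SI(B_2)$. It is in particular a bijection compatible with the common intersection graph, so the preceding lemma applies with this $\phi$ and gives
\[
{}^{M\partial}{\varepsilon^{\times}_{B_1}}(z)=(-1)^{|\phi(T_1)\Delta T_2|}\cdot{}^{M\partial}{\varepsilon^{\times}_{B_2}}(z),
\]
where $T_i$ is the set of twisted edges of $B_i$.

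It remains to compute $\phi(T_1)\Delta T_2$. In $SI(B_i)$, a vertex carries sign $-$ exactly when the corresponding loop is twisted, so $T_i$ is precisely the set of vertices of $SI(B_i)$ with sign $-$. Since $\phi$ preserves vertex signs, $e\in T_1$ holds if and only if $\phi(e)\in T_2$. Hence $\phi(T_1)=T_2$, the symmetric difference is empty, and its cardinality $0$ is even. The sign factor is therefore $(-1)^0=1$, which gives the corollary.

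The only subtle point is the choice of $\phi$. The lemma's $\phi$ is just "the bijection" coming from equal intersection graphs, and for a graph with nontrivial automorphisms a different choice of $\phi$ could change $|\phi(T_1)\Delta T_2|$. We therefore must take $\phi$ to be the sign-preserving identification. This is legitimate because the lemma's proof uses only two properties of $\phi$: it is a bijection, and for every $A$ it satisfies $SI(B_1^{\times|A})=SI(B_2^{\times|g(A)})$. Both properties hold for any bijection induced by an isomorphism of intersection graphs, and in particular for the sign-preserving one.
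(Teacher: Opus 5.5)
Your proof is correct and takes the same route as the paper: the corollary is the preceding lemma applied with a sign-preserving $\phi$, so $\phi(T_1)=T_2$ and the sign factor is $(-1)^0=1$. Your worry about the choice of $\phi$ is harmless but unnecessary: $|\phi(T_1)\Delta T_2|\equiv |T_1|+|T_2| \pmod 2$ for every bijection $\phi$, so the sign is the same for any isomorphism of the intersection graphs.
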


Inspired by Corollary \ref{signedintersectiondeterminedMpPp}, we can reduce the modified partial Petrial polynomial of bouquets to this polynomial of its signed intersection graphs. Intersection graphs are simple graphs, but not every simple graph is an intersection graph \cite{circle_graph}. Thus, we extend the definition of modified partial Petrial polynomial from signed intersection graphs (bouquets) to all simple signed graphs.%Since signed intersection graphs are signed simple graphs, we make the following generalization definition.

\begin{defi}\label{extendpPp}
For a simple signed graph $S$, let $X$ be the set of all vertices with a negative sign. The {modified partial Petrial polynomial} of $S$, denoted by $^{M\partial}{\varepsilon^{\times}_{S}}(z)$, is defined as %the generating function
\[^{M\partial}{\varepsilon^{\times}_{S}}(z)=
%\sum\limits_{A\subseteq V(S)}(-1)^{|A|}z^{{\text{rank}(\text{adj}(S)+\mathbb{D}_{X}+\mathbb{D}_A)}_{\mathbb{Z}_2}}=
\sum\limits_{A\subseteq V(S)} (-1)^{|A|} z^{\text{rank}(\text{adj}(S)+\mathbb{D}_{X\Delta A}) %_{\mathbb{Z}_2}
}.\]
	%\noindent that enumerates partial Petrials of $B$ by Euler genus. 
\end{defi}

In order to prove the modified partial Petrial polynomial is $4$-invariant, we define two graph operations $S^{\prime}_{a,b}$ and $\widetilde{S}_{a,b}$, on a simple signed graph $S$, which corresponds to non-orientable bouquets version if $S$ is a signed intersection graph. Consider %ing a simple signed graph $S$ and 
two distinct vertices $a, b\in V(S)$. %, there are two associated graphs $S_{a,b}^{\prime}$ and $\widetilde{S}_{a,b}$. 
The graph $S_{a,b}^{\prime}$ is obtained from $S$ by removing the edge $ab$ if it exists, or by adding it if it does not exist. Let $N(b)$ be the set of all neighbours of the vertex $b$ in $S$. The graph $\widetilde{S}_{a,b}$ is obtained from $S$ by first changing the adjacency between $a$ and all vertices of $N(b)$, and then changing the sign of $a$ if the sign of $b$ is negative; otherwise, the sign of $a$ is unchanged. \\
\indent By the definition of $S^{\prime}_{a,b}$  (resp. $\widetilde{S}_{a,b}$), Lemma \ref{adj(exchange)} (resp. \ref{adj(slide)}) holds, which shows the relationship between the adjacency matrices of $S$ and $S^{\prime}_{a,b}$  (resp. $\widetilde{S}_{a,b}$).

\begin{lem}\label{adj(exchange)}
	Let $S$ be a simple signed graph, then $\text{adj}(S^{\prime}_{a,b})$ is obtained from $\text{adj}(S)$ by only adding one to the two elements at the intersection of the row corresponding to vertex $a$ (resp. $b$) and the column corresponding to vertex $b$ (resp. $a$) in matrix $\text{adj}(S)$ over $\mathbb{GF}(2)$.
\end{lem}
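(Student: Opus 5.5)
The statement follows directly from the definitions, so I will simply unpack how $\text{adj}(S)$ is built and compare it with $\text{adj}(S^{\prime}_{a,b})$ entry by entry.

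First I fix the conventions. The paper uses $\text{adj}(S)$ for the matrix $\text{adj}(I)$ of the underlying simple graph with the signs recorded on the diagonal. Off the diagonal, the entry $(u,v)$ with $u\neq v$ is $1$ exactly when $uv\in E(S)$. The diagonal entry $(u,u)$ is $1$ exactly when $u$ has sign $-$. All arithmetic is over $\mathbb{GF}(2)$.

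Next I record what the operation $S^{\prime}_{a,b}$ does. It leaves the vertex set and all vertex signs unchanged. It also leaves every pair $\{u,v\}\neq\{a,b\}$ unchanged. The only change is that the adjacency status of the single pair $\{a,b\}$ is toggled.

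Now I compare the two matrices, splitting the entries into three classes.
\begin{itemize}
\item \emph{Diagonal entries.} These depend only on the signs, which are unchanged, so they agree.
\item \emph{Off-diagonal entries $(u,v)$ with $\{u,v\}\neq\{a,b\}$.} These depend only on the adjacency of $u$ and $v$, which is unchanged, so they agree.
\item \emph{The entries $(a,b)$ and $(b,a)$.} These are equal because $\text{adj}$ is symmetric. Each switches between $0$ and $1$, which over $\mathbb{GF}(2)$ is exactly adding $1$.
\end{itemize}

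Hence $\text{adj}(S^{\prime}_{a,b})=\text{adj}(S)+E_{ab}+E_{ba}$, where $E_{ab}$ and $E_{ba}$ are the matrix units at positions $(a,b)$ and $(b,a)$. This is the claim.

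There is no real obstacle. The only point needing care is to confirm that the diagonal encoding (sign $\mapsto$ diagonal $1$) is untouched by $S^{\prime}_{a,b}$, which holds because this operation does not alter signs. That contrasts with $\widetilde{S}_{a,b}$, which can change the sign of $a$.
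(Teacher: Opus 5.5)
Your proof is correct and follows the paper's approach: the paper gives no argument beyond saying the lemma holds by the definition of $S^{\prime}_{a,b}$, and your entry-by-entry comparison (diagonal unchanged because signs are unchanged, only the symmetric pair $(a,b),(b,a)$ toggled) is exactly that unpacking. The conclusion also holds whether or not $\text{adj}(S)$ records the signs on its diagonal. This matters because the paper is not fully consistent on that point; for instance, Definition~\ref{extendpPp} adds the signs separately via $\mathbb{D}_X$. Either way, $S^{\prime}_{a,b}$ leaves the diagonal untouched.
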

\begin{lem}\label{adj(slide)}
	Let $S$ be a simple signed graph, then the adjacency matrix 
	$\text{adj}(\widetilde{S}_{a,b})$ is obtained from $\text{adj}(S)$ by adding the row and column corresponding to the vertex $b$ to the row and column corresponding to the vertex $a$ simultaneously, respectively, over $\mathbb{GF}(2)$ and keeping all elements in other rows and columns unchanged.
\end{lem}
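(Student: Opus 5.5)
We read $\text{adj}(S)$ here as the ordinary $0/1$ adjacency matrix of the underlying simple graph, so that all its diagonal entries are $0$; the signs are carried separately by $\mathbb{D}_X$, as in Definition \ref{extendpPp}. The plan is a direct entrywise comparison. Write $M=\text{adj}(S)$. Let $M^{\ast}$ be the matrix obtained by first replacing row $a$ by row $a$ plus row $b$, and then replacing column $a$ by column $a$ plus column $b$, over $\mathbb{GF}(2)$. Both operations act on the left or right by the same elementary matrix $E=I+e_ae_b^{T}$, namely $M^{\ast}=EME^{T}$. Hence the order of the two operations does not matter, and $M^{\ast}$ is again symmetric. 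We will check that $M^{\ast}_{uv}$ equals the $(u,v)$ entry of $\text{adj}(\widetilde{S}_{a,b})$ for every pair $u,v$.

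\emph{Entries away from row and column $a$.} Only row $a$ and column $a$ are altered. So $M^{\ast}_{uv}=M_{uv}$ whenever $u,v\neq a$. This agrees with the definition of $\widetilde{S}_{a,b}$, where only adjacencies involving $a$ change.

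\emph{Entries in row and column $a$.} Take first $c\notin\{a,b\}$. Then $M^{\ast}_{ac}=M_{ac}+M_{bc}$, and by symmetry $M^{\ast}_{ca}=M_{ca}+M_{cb}$. The adjacency of $a$ and $c$ is therefore toggled exactly when $c\in N(b)$, which is the defining rule of $\widetilde{S}_{a,b}$.

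For the entry $(a,b)$: the row operation gives $M_{ab}+M_{bb}=M_{ab}$, because $M_{bb}=0$. The column operation does not touch column $b$, so this entry is unchanged. Symmetrically, $M^{\ast}_{ba}=M_{ba}+M_{bb}=M_{ba}$. This matches the fact that $b\notin N(b)$, so the edge $ab$ is not altered.

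For the diagonal entry: $M^{\ast}_{aa}=M_{aa}+M_{ba}+M_{ab}+M_{bb}=2M_{ab}=0$. So $M^{\ast}$ has zero diagonal, i.e. it is again the adjacency matrix of a simple graph.

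\emph{Signs.} The step needing the most care is keeping track of the signs consistently, since this is what makes the lemma usable later. If we instead use the signed matrix $\text{adj}(S)+\mathbb{D}_X$, the same computation gives a new $(a,a)$ entry of $M_{aa}+M_{bb}$. This is exactly ``the sign of $a$ changes iff $b$ is negative'', as in the definition of $\widetilde{S}_{a,b}$. However, the $(a,b)$ entry would then pick up an extra $M_{bb}$. So that version of the statement holds only up to this correction in the off-diagonal entries $(a,b),(b,a)$ when $b$ is negative.

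This is why we state and prove the lemma for the unsigned $\text{adj}(S)$, which is how it is used with $\mathbb{D}_{X\Delta A}$ in Definition \ref{extendpPp}. The sign update of $a$ is then recorded separately: the new negative set becomes $X$ or $X\Delta\{a\}$ according to whether $b\notin X$ or $b\in X$. This completes the verification.
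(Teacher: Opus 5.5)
Your proof is correct and takes essentially the paper's approach: the paper only asserts that the lemma follows from the definition of $\widetilde{S}_{a,b}$, and your entrywise check of $E\,\text{adj}(S)\,E^{T}$ with $E=I+e_ae_b^{T}$ is that verification written out. You are also right to insist on the zero-diagonal reading of $\text{adj}(S)$, which is the reading Definition~\ref{extendpPp} requires; with signed diagonal entries the $(a,b)$ entry would become $x_5+x_6$, as in the paper's own block formula for $\text{adj}(\widetilde{S}_{a,b})$, whereas the definition of $\widetilde{S}_{a,b}$ leaves the adjacency of $a$ and $b$ unchanged when $b$ is negative.
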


% Because elementary transformations do not change the rank of a matrix, Lemma \ref{rankI=rankI_slide} holds.
 
%\begin{lem}\label{rankI=rankI_slide}
%For a simple signed graph $S$ and two vertices $a, b\in V(S)$, then $$\text{rank}(\text{adj}(S))_{\mathbb{Z}_2}=\text{rank}(\text{adj}(\widetilde{S}_{a,b}))_{\mathbb{Z}_2}$$
%\end{lem}
%\begin{thm}
%For any simple signed graph $S$ and any pair of vertices $a, b \in V(S)$, the extented partial Petrial polynomial $^{G\partial}{\varepsilon^{\times}_{S}}(z)$ of  $S$ satisfies the following four-term relation:
%\[^{G\partial}{\varepsilon_{S}^{\times}}(z) -^{G\partial}{\varepsilon_{\widetilde{S}_{a,b}}^{\times}}(z) + ^{G\partial}{\varepsilon_{\widetilde{S^{\prime}}_{a,b}}^{\times}}(z) - ^{G\partial}{\varepsilon_{S^{\prime}_{ab}}^{\times}}(z)=0.
%\]
%\end{thm}
Next, we prove Theorem \ref{rankoffourterm}, which implies that the modified partial Petrial polynomial of all simple signed graphs is a $4$-invariant. This provides an answer of Problem \ref{Problem_4-invariants}.

\vspace{0.6em}
\noindent\emph{\textbf{Proof of Theorem \emph{\ref{rankoffourterm}}}~~}
To prove that the {modified partial Petrial polynomial} of any simple signed graph is a $4$-invariant, by the definition of $4$-invariant, we only need to prove that 
%the modified partial Petrial polynomial for any  simple signed graph $S$ satisfies 
$^{M\partial}{\varepsilon_{S}^{\times}}(z) -^{M\partial}{\varepsilon_{\widetilde{S}_{a,b}}^{\times}}(z) + ^{M\partial}{\varepsilon_{\widetilde{S^{\prime}}_{a,b}}^{\times}}(z) - ^{M\partial}{\varepsilon_{S^{\prime}_{a,b}}^{\times}}(z)=0$
for any  simple signed graph $S$ and any pair of distinct vertices $a, b \in V(S)$. It suffices to prove that 
\begin{equation}\label{eqth4.1}
	^{M\partial}{\varepsilon_{S}^{\times}}(z)  + ^{M\partial}{\varepsilon_{\widetilde{S^{\prime}}_{a,b}}^{\times}}(z) =
	^{M\partial}{\varepsilon_{\widetilde{S}_{a,b}}^{\times}}(z)+^{M\partial}{\varepsilon_{S^{\prime}_{a,b}}^{\times}}(z).
\end{equation}
 
Assume that $X$ is the set of all vertices of $S$ with a negative sign. We divide the power set of $V(S)$ into the disjoint union of four
parts in the following two different ways:\\
\indent ($1$) $V(S)=V_1\cup V_2 \cup V_3 \cup V_4$, where
$V_1 = \{A \subseteq V(S)\mid a,b\notin A\Delta X\}$,
$V_2 = \{A \subseteq V(S)\mid a\in A\Delta X, b\notin A\Delta X\}$, $V_3 = \{A \subseteq V(S)\mid a\notin A\Delta X, b\in A\Delta X\}$ and $V_4 = \{A \subseteq V(S)\mid a,b\in A\Delta X\}$; \\
\indent ($2$) $V(S)=V_1^{\prime}\cup V_2^{\prime} \cup V_3^{\prime} \cup V_4^{\prime}$, where
$V_1^{\prime} = \{A \subseteq V(S)\mid a,b\notin A\}$,
$V_2^{\prime} = \{A \subseteq V(S)\mid a\in A, b\notin  A\}$, 
$V_3^{\prime} = \{A \subseteq V(S)\mid a\notin A, b\in A\}$ and 
$V_4^{\prime} = \{A \subseteq V(S)\mid a,b\in  A\}$.\\
\noindent We first prove that the following claim holds.

\vspace{0.3em}
\noindent\textbf{Claim.} There is a bijection $m: \{1,2,3,4\}\rightarrow\{1,2,3,4\}$ such that $X\Delta A\in V_i$ if and only if $A\in V_{m(i)}^{\prime}$ for any $A\subseteq V(S)$. 

\vspace{0.3em}
\noindent\emph{\textbf{Proof of Claim.}}
According to the definition of the four-term relation as shown in Fig. \ref{Fig_four-term}, we only need to discuss three cases as follows.

For $i\in\{1,2,3,4\}$, if $a,b\notin X$, let $m(i)=i$; if $a\in X,b\notin X$, let $m(i)=i+(-1)^{i+1}$; 
%\begin{align*}
%	m(i)=	\begin{cases}
	%	2, 
	%	& \text{if~} i=1;\\
	%	1,
	%	& \text{if~} i=2;\\
	%	4, 
	%	& \text{if~} i=3;\\
	%	3,
	%	& \text{if~} i=4;
	%\end{cases}
%\end{align*}
if $a\notin X,b\in X$, let $m(i)=(i+2)\pmod 4$.
  %\begin{align*}
%	m(i)=	\begin{cases}
	%	3, 		& \text{if~} i=1;\\
	%	4,      & \text{if~} i=2;\\
	%	1,   	& \text{if~} i=3;\\
	%	2,   	& \text{if~} i=4.
	%\end{cases}
%\end{align*}

For each case, we will check that $X\Delta A\in V_i$ if and only if $A\in V_{m(i)}^{\prime}$ for any $A\subseteq V(S)$. Since all verifications are similar, we only check the case that $a,b\notin X$. 
By the definitions of $V_i$ and $V_i^{\prime}$, $i\in \mathbb{Z}_4$, one has that
\begin{align*}
	\begin{cases}
		A\in V_1 \Leftrightarrow a,b\notin A\Delta X \Leftrightarrow a,b\notin A \Leftrightarrow A\in V_1^{\prime};\\
	    A\in V_2 \Leftrightarrow a\in A\Delta X,b\notin A\Delta X \Leftrightarrow a\in A,b\notin A \Leftrightarrow A\in V_2^{\prime};\\
	    A\in V_3 \Leftrightarrow a\notin A\Delta X,b\in A\Delta X \Leftrightarrow a\notin A,b\in A \Leftrightarrow A\in V_3^{\prime};\\
	    A\in V_4 \Leftrightarrow a,b\in A\Delta X \Leftrightarrow a,b\in A \Leftrightarrow A\in V_4^{\prime}.\\
	\end{cases}
\end{align*}
This claim is proved.

Next, we will prove Equation (\ref{eqth4.1}). One has that
\begin{equation}\label{eq_2}
	\begin{aligned}
		\!\!\!^{M\partial}{\varepsilon_{S}^{\times}}(z) \!+\!\! ^{M\partial}{\varepsilon_{\widetilde{S^{\prime}}_{a,b}}^{\times}}\!(z)
		\!&=\!\sum\limits_{A\subseteq V(S)}(-1)^{|A|}(z^{\text{rank}(\text{adj}(S)+\mathbb{D}_{X\Delta A})%_{\mathbb{Z}_2}
		} \!+\! z^{\text{rank}(\text{adj}(\widetilde{S^{\prime}}_{a,b})+\mathbb{D}_{X\Delta A})%_{\mathbb{Z}_2}
		})
		&   \text{Definition}~\ref{extendpPp}\\
		\!&=\!\sum\limits_{i=1}^4\!\sum\limits_{A\in V_i}\!(-1)^{|A|}(z^{\text{rank}(\text{adj}(S)+\mathbb{D}_{X\Delta A})%_{\mathbb{Z}_2}
		} \!+\! z^{\text{rank}(\text{adj}(\widetilde{S^{\prime}}_{a,b})+\mathbb{D}_{X\Delta A})%_{\mathbb{Z}_2}
		})\\
		\!&=\! \sum\limits_{i=1}^4\!\sum\limits_{Y\Delta X\in V_i}\!\!(-1)^{|Y\Delta X|}(z^{\text{rank}(\text{adj}(S)+\mathbb{D}_{Y})%_{\mathbb{Z}_2}
		} \!+\! z^{\text{rank}(\text{adj}(\widetilde{S}^{\prime}_{a,b})+\mathbb{D}_{Y})%_{\mathbb{Z}_2}
		}) &  Y \! =\! X\Delta A\\
		\!&=\! \sum\limits_{i=1}^4\!\sum\limits_{Y\in V_i^{\prime}}(-1)^{|Y\Delta X|}(z^{\text{rank}(\text{adj}(S)+\mathbb{D}_{Y})%_{\mathbb{Z}_2}
		} + z^{\text{rank}(\text{adj}(\widetilde{S}^{\prime}_{a,b})+\mathbb{D}_{Y})%_{\mathbb{Z}_2}
		}) &  \text{Claim}\\
	\end{aligned} 
\end{equation}
and by a similar way, we have
\begin{equation}\label{eq_3}
	\begin{aligned}
		^{M\partial}{\varepsilon_{\widetilde{S}_{a,b}}^{\times}}(z)  + ^{M\partial}{\varepsilon_{{S^{\prime}}_{a,b}}^{\times}}(z)
		=\sum\limits_{i=1}^4\sum\limits_{Y\in V_i^{\prime}}(-1)^{|Y\Delta X|}(z^{\text{rank}(\text{adj}(\widetilde{S}_{a,b})+\mathbb{D}_{Y})%_{\mathbb{Z}_2}
		} + z^{\text{rank}(\text{adj}({S}^{\prime}_{a,b})+\mathbb{D}_{Y})%_{\mathbb{Z}_2}
		}).\\
	\end{aligned} 
\end{equation}

If $Y\in V_1^{\prime}\cup V_2^{\prime}$, one has $b\notin Y$, which implies that the diagonal element of matrix $\mathbb{D}_Y$ at vertex $b$ is 0. Recall that $\text{adj}(\widetilde{S}_{a,b})$ is obtained from $\text{adj}(S)$ by adding the row and column of the vertex $b$ to the row and column of the vertex $a$ simultaneously, respectively, over $\mathbb{GF}(2)$ and keeping all elements in other rows and columns unchanged as Lemma \ref{adj(slide)}. Thus $\text{adj}(\widetilde{S}_{a,b})+\mathbb{D}_Y$ is obtained from $\text{adj}(S)+\mathbb{D}_Y$ with the same elementary transformations of matrix. 
Because elementary transformations do not change the rank of a matrix, we have that 
\begin{equation}\label{Eq4}
	\text{rank}(\text{adj}({S})+\mathbb{D}_Y)%_{\mathbb{Z}_2}
	=\text{rank}(\text{adj}(\widetilde{S}_{a,b})+\mathbb{D}_{Y})%_{\mathbb{Z}_2}
	~~~~~~~~~~~~~\text{for any $Y\in V_1^{\prime}\cup V_2^{\prime}$}
\end{equation} 
and 
\begin{equation}\label{Eq5}
	\text{rank}(\text{adj}({S^{\prime}})+\mathbb{D}_Y)%_{\mathbb{Z}_2}
	=\text{rank}(\text{adj}(\widetilde{S}^{\prime}_{a,b})+\mathbb{D}_{Y})%_{\mathbb{Z}_2}
	~~~~~~~~~~~~~\text{for any $Y\in V_1^{\prime}\cup V_2^{\prime}$}.
\end{equation} 

Assume that the block matrix representation of matrix $\text{adj}(S)$ is 
\[\text{adj}(S) = 
\begin{array}{c@{}c}
	& \begin{array}{ccc}
		V(S)\backslash \{a,b\} & ~~a~~~ & ~~~b~~~ \\
	\end{array} \\
	\begin{array}{c}
		V(S)\backslash\{a,b\} \\ a \\ b
	\end{array} &
	\left[
	\begin{array}{ccc}
		~~~~X_1~~~~~~ & ~~~X_2~~~ & ~~X_3 \\
		~~~~X_2'~~~~~~ & ~~~x_4~~~ & ~~x_5 \\
		~~~~X_3'~~~~~~ & ~~~x_5~~~ & ~~x_6 \\
	\end{array}
	\right].
\end{array}
\]
By Lemma \ref{adj(exchange)} and Lemma \ref{adj(slide)}, we have that 
\[
\begin{array}{cc}
	\text{adj}(\widetilde{S}_{a,b})= \begin{bmatrix}
		X_1 & X_2+X_3 & X_3 \\
		X_2^{\prime}+X_3^{\prime} & x_4+x_6 & x_5+x_6 \\
		X_3^{\prime} & x_5+x_6 & x_6 \\
	\end{bmatrix},
	&
	\text{adj}(S^{\prime}_{a,b})= \begin{bmatrix}
		X_1 & X_2 & X_3 \\
		X_2^{\prime} & x_4 & x_5+1 \\
		X_3^{\prime} & x_5+1 & x_6 \\
	\end{bmatrix}
\end{array}
\]
and 
\[\text{adj}(\widetilde{S}^{\prime}_{a,b})= \begin{bmatrix}
	X_1 & X_2+X_3 & X_3 \\
	X_2^{\prime}+X_3^{\prime} & x_4+x_6 & x_5+x_6+1 \\
	X_3^{\prime} & x_5+x_6+1 & x_6 \\
\end{bmatrix}.\]

If $Y\in V_3^{\prime}\cup V_4^{\prime}$, one has $b\in Y$. Assume that $d_1=1$ if $a\in Y$; otherwise, $d_1=0$. We have that
\[\text{adj}({S})+\mathbb{D}_Y= \begin{bmatrix}
		X_1+\mathbb{D}_{V(S)\backslash\{a,b\}} & X_2 & X_3 \\
		X_2^{\prime} & x_4+d_1 & x_5 \\
		X_3^{\prime} & x_5 & x_6+1 \\
	\end{bmatrix}\]
and
\[\text{adj}(\widetilde{S}^{\prime}_{a,b}+\mathbb{D}_{Y\Delta\{a\}})= \begin{bmatrix}
	X_1+\mathbb{D}_{V(S)\backslash\{a,b\}}  & X_2+X_3 & X_3 \\
	X_2^{\prime}+X_3^{\prime} & x_4+x_6+d_1+1 & x_5+x_6+1 \\
	X_3^{\prime} & x_5+x_6+1 & x_6+1 \\
\end{bmatrix}.\]
It follows that the matrix $\text{adj}({S})+\mathbb{D}_Y$ is obtained from
$\text{adj}(\widetilde{S}^{\prime}_{a,b})+\mathbb{D}_{Y\Delta\{a\}}$ by adding 
the row and column of the vertex $b$ to the row and column of the vertex $a$ simultaneously over $\mathbb{GF}(2)$. Thus%, which implies that 
\begin{equation}\label{Eq6}
	%\begin{aligned}
	\text{rank}(\text{adj}({S})+\mathbb{D}_Y)%_{\mathbb{Z}_2}
	=\text{rank}(\text{adj}(\widetilde{S}^{\prime}_{a,b})+\mathbb{D}_{Y\Delta \{a\}})%_{\mathbb{Z}_2}
	~~~~~~~~\text{for any $Y\in V_3^{\prime}\cup V_4^{\prime}$}.
	%\end{aligned}
\end{equation}
In a similar way, one has that
\begin{equation}\label{Eq7}
	%\begin{aligned}
	\text{rank}(\text{adj}(\widetilde{S}_{a,b})+\mathbb{D}_Y)%_{\mathbb{Z}_2}
	=\text{rank}(\text{adj}({S}^{\prime}_{a,b})+\mathbb{D}_{Y\Delta \{a\}})%_{\mathbb{Z}_2}
	~~~~~~~~\text{for any $Y\in V_3^{\prime}\cup V_4^{\prime}$}.
	%\end{aligned}
\end{equation}
\noindent By Equations (\ref{eq_2}),(\ref{Eq4}),(\ref{Eq5}) and (\ref{Eq6}), one has that
\begin{equation}\label{eq_8}
	\begin{aligned}
		\!^{M\partial}{\varepsilon_{S}^{\times}}(z) \!+\!\! ^{M\partial}{\varepsilon_{\widetilde{S^{\prime}}_{a,b}}^{\times}}\!(z)
		\!&=\!\sum\limits_{i=1}^2 \! \sum\limits_{Y\in V_i^{\prime}}(-1)^{|Y\Delta X|}(z^{\text{rank}(\text{adj}(\widetilde{S}_{a,b})+\mathbb{D}_{Y})}\!+\!z^{\text{rank}(\text{adj}({S}^{\prime}_{a,b})+\mathbb{D}_{Y})})\\
		\!&+\!
		\sum\limits_{i=3}^4 \! \sum\limits_{Y\in V_i^{\prime}}(-1)^{|Y\Delta X|}(z^{\text{rank}(\text{adj}(\widetilde{S}^{\prime}_{a,b})+\mathbb{D}_{Y\Delta \{a\}})} \!+\! z^{\text{rank}(\text{adj}(\widetilde{S}^{\prime}_{a,b})+\mathbb{D}_{Y})})  \\
    	\! &=\! \sum\limits_{i=1}^2 \! \sum\limits_{Y\in V_i^{\prime}}(-1)^{|Y\Delta X|}(z^{\text{rank}(\text{adj}(\widetilde{S}_{a,b})+\mathbb{D}_{Y})}+z^{\text{rank}(\text{adj}({S}^{\prime}_{a,b})+\mathbb{D}_{Y})}).\\
	\end{aligned} 
\end{equation}
The second equation follows from the fact that the sum $\sum\limits_{Y\in V_4^{\prime}}(-1)^{|Y\Delta X|}(z^{\text{rank}(\text{adj}(\widetilde{S}^{\prime}_{a,b})+\mathbb{D}_{Y\Delta \{a\}})}+ z^{\text{rank}(\text{adj}(\widetilde{S}^{\prime}_{a,b})+\mathbb{D}_{Y})})$ equals $\sum\limits_{Y\in V_3^{\prime}}\!(-1)^{|Y\Delta\{a\}\Delta X|}(z^{\text{rank}(\text{adj}(\widetilde{S}^{\prime}_{a,b})+\mathbb{D}_{Y})}  +  z^{\text{rank}(\text{adj}(\widetilde{S}^{\prime}_{a,b})+\mathbb{D}_{Y\Delta \{a\}})}).$ 
%$\sum\limits_{Y\in V_4^{\prime}}(-1)^{|Y\Delta X|}(z^{\text{rank}(\text{adj}(\widetilde{S}^{\prime}_{a,b})+\mathbb{D}_{Y\Delta \{a\}})}+ z^{\text{rank}(\text{adj}(\widetilde{S}^{\prime}_{a,b})+\mathbb{D}_{Y})}) = \sum\limits_{Y\in V_3^{\prime}}\!(-1)^{|Y\Delta\{a\}\Delta X|}(z^{\text{rank}(\text{adj}(\widetilde{S}^{\prime}_{a,b})+\mathbb{D}_{Y})}  +  z^{\text{rank}(\text{adj}(\widetilde{S}^{\prime}_{a,b})+\mathbb{D}_{Y\Delta \{a\}})}).$ 
In a similarly way, by Equations (\ref{eq_3}) and (\ref{Eq7}), we have that
\begin{equation}\label{eq_9}
	\begin{aligned}
		^{M\partial}{\varepsilon_{\widetilde{S}_{a,b}}^{\times}}(z)  + ^{M\partial}{\varepsilon_{{S^{\prime}}_{a,b}}^{\times}}(z)
		&=\sum\limits_{i=1}^2\sum\limits_{Y\in V_i^{\prime}}(-1)^{|Y\Delta X|}(z^{\text{rank}(\text{adj}(\widetilde{S}_{a,b})+\mathbb{D}_{Y})}+z^{\text{rank}(\text{adj}({S}^{\prime}_{a,b})+\mathbb{D}_{Y})}) \\
		&+ \sum\limits_{i=3}^4\sum\limits_{Y\in V_i^{\prime}}(-1)^{|Y\Delta X|}(z^{\text{rank}(\text{adj}({S}^{\prime}_{a,b})+\mathbb{D}_{Y\Delta \{a\}})}+z^{\text{rank}(\text{adj}({S}^{\prime}_{a,b})+\mathbb{D}_{Y})})\\
		&=\sum\limits_{i=1}^2\sum\limits_{Y\in V_i^{\prime}}(-1)^{|Y\Delta X|}(z^{\text{rank}(\text{adj}(\widetilde{S}_{a,b})+\mathbb{D}_{Y})}+z^{\text{rank}(\text{adj}({S}^{\prime}_{a,b})+\mathbb{D}_{Y})}).\\
	\end{aligned} 
\end{equation}
Thus $^{M\partial}{\varepsilon_{S}^{\times}}(z) +
^{M\partial}{\varepsilon_{\widetilde{S}^{\prime}_{a,b}}^{\times}}(z)  =^{M\partial}{\varepsilon_{\widetilde{S}_{a,b}}^{\times}}(z)  + ^{M\partial}{\varepsilon_{S^{\prime}_{a,b}}^{\times}}(z)$ by Equations (\ref{eq_8}) and (\ref{eq_9}). 

It is finished.
\hfill{$\square$}

\section{Conclusion}
In this paper, we prove that the Euler genus of a ribbon graph $G$ equals the rank of an adjacency matrix associated with $G$ and
give an equivalent rank-based expression for the partial Petrial polynomial of a ribbon graph, which transforms problems in topological graph theory into an algebraic framework. 
Furthermore, we 
derive an equivalent relation on four chord diagrams in terms of their partial Petrial polynomials and extend the partial Petrial polynomial by assigning coefficients +1 or -1  to its terms in the partial Petrial polynomial, which satisfies the four-term relation of graphs.  %In the future, we propose the following natural problem:

%\begin{Problem}
%Can we find an algorithm that, based on its intersection graph, determines a Euler-genus-minimizing partial Petrial graph of $B_n$\emph{?}
%Under what necessary and sufficient conditions does the number of terms in the Partial Petrial polynomial of ribbon graph remain equal between a ribbon graph and its subgraph\emph{?}
%\end{Problem}

%the genus saturates as trees are incorporated
%In this paper, we prove that the partial-dual Euler-genus polynomial $^{\partial}{\varepsilon_{B}}(z)$ of a prime bouquet $B$ is odd-interpolation if it contains a first-order term, which gives an answer  of Problem \ref{problem} for the odd case. In fact, there exist prime bouquets whose polynomials contain a first-order term but lack the second-order term. For instance, let $B=(1,2,4,3,-1,5,-2,-3,5,4)$, then  $^{\partial}{\varepsilon_{B}}(z)=12z^5 + 8z^4 + 10z^3 + 2z$, which implies that $^{\partial}{\varepsilon_{B}}(z)$ is both odd-interpolating and even-interpolating, but is not interpolating. %The characterization of a prime bouquet $B$ such that $^{\partial}{\varepsilon_{B}}(z)$ contains a second-order term is given in Theorem \ref{mainthm_second}. 

\section*{Acknowledgment}

\indent
This work was partially supported by the National Natural Science Foundation of China
(Nos. 12471321 and 12331013).


\begin{thebibliography}{99}
\bibitem{Beck}
I. Beck, Cycle decomposition by transpositions,  \textit{Journal of Combinatorial Theory, Series B}, 23 (2) (1977): 198–207.

\vspace{-0.6em}
\bibitem{2002_Ribbon graph}
B. Bollob\'{a}s and O. Riordan, A polynomial of graphs on surfaces, \textit{Mathematische Annalen}, 323 (2002): 81-96.

\vspace{-0.6em}
\bibitem{fourterm_Chmutov}
S. Chmutov, Partial-dual genus polynomial as a weight system, \textit{Communications in Mathematics}, 31 (2023): 113-124.

\vspace{-0.6em}
\bibitem{circle_graph}
S. Chmutov, S. Duzhin and J. Mostovoy, \textit{Introduction to Vassiliev knot invariants}, Cambridge University Press, 2012.

\vspace{-0.6em}
\bibitem{Counterexample_ChenYichao}
Q. Chen and Y. Chen, Parallel edges in ribbon graphs and interpolating behavior of partial-duality polynomials, \textit{European Journal of Combinatorics}, 102 (2022): 103492.

\vspace{-0.6em}
\bibitem{fourterm_chengzhiyun}
Z. Cheng, Partial-dual genus polynomial of graphs, \textit{European Journal of Combinatorics}, 130 (2025): 104221.

\vspace{-0.6em}
\bibitem{fourterm_DengYan}
Q. Deng, F. Dong, X. Jin and Q. Yan, Partial-dual polynomial as a framed weight system, \textit{Communications in Mathematics}, 31 (2023): 151-160.

\vspace{-0.6em}
\bibitem{2020_I}
J. Gross, T. Mansour and T. Tucker, Partial duality for ribbon graphs, I: Distributions, \textit{European Journal of Combinatorics}, 86 (2020): 103084.

\vspace{-0.6em}
\bibitem{2021_II}
J. Gross, T. Mansour and T. Tucker, Partial duality for ribbon graphs, II: Partial-duality polynomials and monodromy computations, \textit{European Journal of Combinatorics}, 95 (2021): 103329.

\vspace{-0.6em}
\bibitem{deffourterm1}
D. Ilyutko and V. Manturov, A parity map of framed chord diagrams, \textit{Journal of Knot Theory and Its Ramifications}, 24(13) (2015): 1541006.

\vspace{-0.6em}
\bibitem{Topology of Surfaces}
L. Kinsey, Topology of Surfaces, \textit{Springer Science and Business Media}, New York, 1993.

\vspace{-0.6em}
\bibitem{def2fourterm1}
S. Lando. J-invariants of plane curves and framed chord diagrams, \textit{Functional Analysis and Its Applications}, 40(1) (2006): 1–10.

\vspace{-0.6em}
\bibitem{problemfourtermrelation}
S. Lando, On a Hopf algebra in graph theory, \textit{Journal of  Combinatorial Theory, Series B}, 80 (1) (2000): 104–121.



%\vspace{-0.6em}
%\bibitem{matroid1}
%A. Bouchet, Greedy algorithm and symmetric matroids, \textit{Mathematical Programming}, 38 (1987): 147--159.

%\vspace{-0.6em}
%\bibitem{matroid2}
%C. Chun, I. Moffatt, S. Noble and R. Rueckriemen, Matroids, delta-matroids and embedded graphs, \textit{Journal of Combinatorial Theory, Series A}, 167 (2019): 7--59.

%\vspace{-0.6em}
%\bibitem{west}
%D. West, \textit{Introduction to graph theory}, Upper Saddle River: Prentice Hall, 2001.

%\vspace{-0.6em}
%\bibitem{Partial}
%S. Chmutov, Generalized duality for graphs on surfaces and the signed Bollob\'{a}s-Riordan polynomial, \textit{Journal of Combinatorial Theory, Series B}, 99(3) (2009): 617--638.

%\vspace{-0.6em}
%\bibitem{dualities, polynomials, and knots}
%J. Ellis-Monaghan and I. Moffatt, \textit{Graphs on surfaces: dualities, polynomials, and knots}, New York: Springer, 2013.

\vspace{-0.6em}
\bibitem{weight}
B. Mellor, A few weight systems arising from intersection graphs, \textit{Michigan Mathematical Journal}, 51(3) (2003): 509-536.

\vspace{-0.6em}
\bibitem{Join_Moffatt} 
I. Moffatt, Separability and the genus of a partial dual, \textit{European Journal of Combinatorics}, 34 (2013): 355-378.

\vspace{-0.6em}
\bibitem{2001_Graphs_on_surfaces}
B. Mohar and C. Thomassen, \textit{Graphs on surfaces}, Johns Hopkins University Press, 2001.

\vspace{-0.6em}
\bibitem{Introduction_Petrial}
S. Wilson, Operators over regular maps, \textit{The Pacific Journal of Mathematics}, 81 (1979): 559--568.


%\vspace{-0.6em}
%\bibitem{Yangyan}
%Y. Yang and X. Zha, Partial-dual Euler-genus distributions for bouquets with small Euler genus, \textit{Ars Mathematica Contemporanea}, 22 (2022): \#P3.09.




%\vspace{-0.6em}
%\bibitem{2007_SI_G}
%S. Chmutov and S. Lando, Mutant knots and intersection graphs, \textit{Algebraic and Geometric Topology}, 7(3) (2007): 1579--1598.

\vspace{-0.6em}
\bibitem{Yanqi_Intersection}
Q. Yan and X. Jin, Partial-dual polynomials and signed intersection graphs, \textit{Forum of Mathematics, Sigma}, 10 (2022): e69.

\vspace{-0.6em}
\bibitem{Matroids_Yan} 
Q. Yan and X. Jin, Partial-twuality polynomials of delta-matroids, \textit{Advances in Applied Mathematics}, 153 (2024): 102623.

\vspace{-0.6em}
\bibitem{Petrial_Yan}
Q. Yan and Y. Li, Partial Petrial polynomials for complete graphs and paths, \textit{Discrete Applied Mathematics}, 375 (2025): 281--289.



%\vspace{-0.6em}
%\bibitem{binomial_Yan}
%R. Feng, Q. Yan and X. Zheng, Characterizing circle graphs with binomial partial Petrial polynomials, \textit{arXiv preprint} (2025): arXiv:2507.02421.

\end{thebibliography}
\end{document}